\documentclass[12pt,a4paper]{amsart}
\usepackage{color,amsmath,amssymb,latexsym,amsfonts,graphicx}
\usepackage[parfill]{parskip}
\usepackage{url}
\usepackage{changes}
\usepackage[framemethod=TikZ]{mdframed}


\textwidth 18cm
\textheight 24cm
\oddsidemargin -1cm
\evensidemargin -1cm
\topmargin -1cm
\headsep 18pt
\footskip 24pt



\newtheorem{theorem}{Theorem}
\newtheorem{corollary}[theorem]{Corollary}
\newtheorem{example}[theorem]{Example}
\newtheorem{lemma}[theorem]{Lemma}
\newtheorem{proposition}[theorem]{Proposition}
\newtheorem{remark}[theorem]{Remark}

\def\bds{\begin{displaystyle}}
\def\eds{\end{displaystyle}}

\def\eqd{\,{\buildrel d \over =}\,}

\def\1{{\mathchoice {1\mskip-4mu\mathrm l}      
{1\mskip-4mu\mathrm l}
{1\mskip-4.5mu\mathrm l} {1\mskip-5mu\mathrm l}}}
\newcommand{\floor}[1]{\left\lfloor #1 \right\rfloor}

\mdfdefinestyle{MyFrame}{%
    linecolor=gray!15!white,
    backgroundcolor=gray!40!white}
\mdfdefinestyle{MyFrame1}{%
    skipbelow=1.5ex plus2pt minus2pt,%
    skipabove=1.5ex plus 2pt minus2pt,%
    linecolor=gray!40!white,%
    backgroundcolor=gray!9!white}
\mdfdefinestyle{MyFrame2}{%
    linecolor=gray!40!white,
    backgroundcolor=gray!0!white}

\newcommand{\sgn}{\hbox{\rm sgn}}
\newcommand{\sgnn}{\hbox{\sc sgn}}


\begin{document}

\title{Limit theorems and ergodicity for general bootstrap random walks}
\author{A. Collevecchio, K. Hamza, M. Shi, R.J. Williams}

\date{}

\maketitle

\begin{abstract}
Given the increments of a simple symmetric random walk $(X_n)_{n\ge0}$, we characterize all possible ways of recycling these increments into a simple symmetric random walk $(Y_n)_{n\ge0}$ adapted to the filtration of $(X_n)_{n\ge0}$. We study the long term behavior of a suitably normalized two-dimensional process $((X_n,Y_n))_{n\ge0}$. In particular, we provide necessary and sufficient conditions for the process to converge to a two-dimensional Brownian  motion (possibly degenerate).  We also discuss cases in which the limit is not Gaussian. Finally, we provide a simple necessary and sufficient condition for the ergodicity of the recycling transformation, thus generalizing results from Dubins and Smorodinsky (1992) and Fujita (2008), and solving the discrete version of the open problem of the ergodicity of the general L\'evy transformation (see Mansuy and Yor, 2006).
\end{abstract}

\section{Introduction}\label{intro}
Simple symmetric random walks, stochastic integrals and measure preserving transformations are ubiquitous to the theory of probability as well as to a wide range of applications. This paper investigates the properties of the most general discrete-time setting that combines all three; that is the most general measure-preserving stochastic integral of a simple symmetric random walk.

More specifically, consider a one-dimensional simple symmetric random walk $(X_n)_n$, with $X_0 = 0$. Let $\xi_n = X_n- X_{n-1}$ be the independent increments of the random walk, with common distribution
$\mathbb{P}(\xi_1=-1) = \mathbb{P}(\xi_1=+1) = 1/2$. We study the measure-preserving, non-anticipative (i.e. adapted) bootstrapping (i.e. recycling) of $(\xi_n)_{n\ge1}$ and obtain a complete description of all functions $\phi_n$ such that the sequence $\eta_n=\phi_n(\xi_1,\ldots,\xi_n)$ replicates the law of the original sequence: $(\eta_n)_{n\ge1}\eqd(\xi_n)_{n\ge1}$.
Such a sequence defines a new simple symmetric random walk
$$Y_n=\sum_{k=1}^n\eta_k,\ n\geq1,\mbox{ and }Y_0=0.$$
Equipped with such a representation, we study the limiting behaviour of a suitably normalized pair $(X_n,Y_n)$ as well as the ergodicity of the recycling transformation.

Seen from the point of view of the random walks themselves, rather than their increments, we aim to study the long term behaviour of the two-dimensional General Bootstrap Random Walk (GBRW) $(X_n,Y_n)$, where $(Y_n)_{n\ge0}$ is also a simple symmetric random walk adapted to the natural filtration of $(X_n)_{n\ge0}$. As such, by the martingale representation property (see for example 15.1 of \cite{Williams91}), $$\eta_n=Y_n-Y_{n-1}=H_{n-1}(X_n-X_{n-1})=H_{n-1}\xi_n,$$
for some adapted process $(H_n)_{n\ge0}$. This combined with the fact that $(Y_n)_{n\ge0}$ is a simple random walk, or equivalently that $\eta_n\in\{-1,1\}$, enable the characterisation and parametrisation of the processes $(H_n)_{n\ge0}$ -- see Section \ref{Section:NaB}. In Section \ref{Section:Gauss}, we give necessary and sufficient conditions for a suitably normalized $(X_n,Y_n)$ to converge weakly to a two-dimensional Brownian motion (possibly degenerate). Section \ref{Section:NonGauss} explores the case of symmetric functions, such as $\eta_n=\sgn(X_{n-1})\xi_n$, and shows that a non-Gaussian limit results from any such symmetric bootstrapping.

Besides describing the asymptotic behaviour of the pair $(X_n,Y_n)$, the paper examines the ergodicity of the recycling transformation and provides a simple necessary and sufficient condition for the latter to hold. This is detailed in Section \ref{Section:Ergodic}.

\subsection*{Literature review}
\underline{The case $\eta_n = \prod_{k=1}^n\xi_k$} has been the object of a number of investigations in a variety of contexts. It was referred to as the Bootstrap Random Walk (BRW) and described in great details in \cite{CHS2016} (see also \cite{CHL2019}).  In mathematical finance, it was used to discuss the continuity of utility maximization under weak convergence (see \cite{Dolinsky2020}). {Within the context of noise sensitivity, \cite{Prigent2020} compares the effects on the sequences $(X_n)_{n\ge0}$ and $(Y_n)_{n\ge0}$ of ``Poisson switches'' in the sequence $(\xi_n)_{n\ge1}$. Owing to the fact that every switch in the sequence $(\xi_n)_{n\ge1}$ results in multiple concurrent switches in the sequence $(\eta_n)_{n\ge1}$, it is shown in \cite{Prigent2020} that the noise sensitivity of $(Y_n)_{n\ge0}$ is grater than that of $(X_n)_{n\ge0}$.}

GBRW bear some relationship to the celebrated elephant random walk introduced by Sch\"utz and Trimper \cite{Schutz2004}. This  is a model for long memory within a random walk setting. In it, the cumulative effect of BRW $\eta_n=\eta_{n-1}\xi_n$ is replaced with $\eta_n=\eta^*\xi_n$, where $\eta^*$ is selected at random from $\{\eta_1,\ldots,\eta_{n-1}\}$. The process is shown in \cite{Schutz2004} to undergo a phase transition at the critical value $p_c=1/2$, from a weakly localized regime to an escape regime. We refer the reader to \cite{Gut}, \cite{Masato} and \cite{Ber1}, and the reference lists within.

Finally, \cite{Volk1} and \cite{Volk2} investigate a time-dependent biased bootstrap random walk and show that as the ``turning'', that is $\mathbb{P}(\xi_n=-1)$, gets slower, the Central Limit Theorem and then the Law of Large Numbers break down.

\underline{The case  $\eta_n=\sgn(X_{n-1})\xi_n$} is the discrete version of, and as such is closely related to, the celebrated L\'evy transformation $$B_\bullet \rightarrow\int_0^\bullet \sgn(B_s)dB_s,$$
where $B$ is a Brownian motion.
The ergodicity of the latter is to this date an open problem (see for example \cite{Prokaj}). The ergodicity of more general, L\'evy-type transformations, $B_\bullet\rightarrow\int_0^\bullet H_sdB_s$, for a predictable process $H_t \in \{-1, 1\}$, is also of significant interest. \cite{MansYor} explicitly asks to find a characterisation of the predictable processes $H$ for which ergodicity holds true. \cite{MansYor} adds that `this seems to be an extremely difficult question, the solution to which has escaped so far both Brownian motion and ergodic theory experts'. We solve, completely, this question in the discrete setting. A previous work by Fujita \cite{Fujita} showed that the discrete L\'evy transformation is not ergodic. We refer the interested reader to \cite{DubinsSmorodinsky} for a modified version for which ergodicity does hold. A consequence of our results is that if we pick a recycling rule uniformly at random, then it is almost surely non-ergodic.

\section{Non-anticipative bootstrapping -- Two representations}\label{Section:NaB}

Let $\xi_1,\xi_2,\ldots$ be independent and identically distributed random variables with
$$\mathbb{P}(\xi_1=-1) = \mathbb{P}(\xi_1=+1) = \frac12.$$
Given a sequence of functions $\phi_n:\{-1,+1\}^n\longrightarrow\{-1,+1\}$, we define
$$\eta_n=\phi_n(\xi_1,\ldots,\xi_n).$$
As mentioned above, a direct consequence of the martingale representation property for the simple symmetric random walk $(X_n)_{n\ge0}$ yields a first representation of $\eta_n$ in terms of the sequence $(\xi_n)_{n\ge1}$.
\begin{mdframed}
[style=MyFrame1]
\begin{proposition}\label{genphiprop}
$(\eta_n)_{n\ge1}\stackrel{d}{=}(\xi_n)_{n\ge1}$ if and only if $\phi_n(u_1,\dots,u_n)$ is of the following form:
\begin{equation}\label{genphi}
\phi_n(u_1,\ldots,u_n) = \psi_{n-1}(u_1,\ldots,u_{n-1})u_n,
\end{equation}
where $\psi_{n-1}(u_1,\ldots,u_{n-1})$ is any function of $(u_1,\ldots,u_{n-1})$ taking values in $\{-1,+1\}$ and $\psi_0\in\{-1,+1\}$ .
\end{proposition}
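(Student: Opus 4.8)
The statement has two directions; the "if" direction is the routine one and the "only if" is where the real content sits. I would set up both directions around the key observation that $(\eta_n)$ has the same law as an i.i.d. fair sign sequence if and only if, for every $n$, $\eta_n$ is conditionally a fair sign given the past $\sigma$-field $\mathcal{F}_{n-1}=\sigma(\xi_1,\dots,\xi_{n-1})$, i.e. $\mathbb{E}[\eta_n\mid\mathcal{F}_{n-1}]=0$ and $\eta_n\in\{-1,+1\}$ (the latter being automatic from $\phi_n$ taking values in $\{-1,+1\}$). Indeed, $(\eta_n)\eqd(\xi_n)$ means the $\eta_n$ are i.i.d. fair signs, which by a standard induction on $n$ is equivalent to the martingale-difference condition $\mathbb{E}[\eta_n\mid\eta_1,\dots,\eta_{n-1}]=0$; and since each $\eta_k$ is $\mathcal{F}_{k-1}\vee\sigma(\xi_k)\subseteq\mathcal{F}_k$-measurable, one upgrades the conditioning to the full filtration $\mathcal{F}_{n-1}$ via the tower property. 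This reduces the proposition to: $\phi_n$ satisfies $\mathbb{E}[\phi_n(\xi_1,\dots,\xi_n)\mid\mathcal{F}_{n-1}]=0$ for all $n$ if and only if $\phi_n$ has the product form \eqref{genphi}.

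**The "if" direction.** Suppose $\phi_n(u_1,\dots,u_n)=\psi_{n-1}(u_1,\dots,u_{n-1})u_n$. Then $\eta_n=\psi_{n-1}(\xi_1,\dots,\xi_{n-1})\,\xi_n$, and since $\psi_{n-1}(\xi_1,\dots,\xi_{n-1})$ is $\mathcal{F}_{n-1}$-measurable while $\xi_n$ is independent of $\mathcal{F}_{n-1}$ with $\mathbb{E}[\xi_n]=0$, we get $\mathbb{E}[\eta_n\mid\mathcal{F}_{n-1}]=\psi_{n-1}(\xi_1,\dots,\xi_{n-1})\,\mathbb{E}[\xi_n]=0$. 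Also $\eta_n\in\{-1,+1\}$ since both factors are. Hence the $(\eta_n)$ form a $\{-1,+1\}$-valued martingale difference sequence, which forces each $\eta_n$ conditionally on the past to be a fair sign, and an induction gives $(\eta_n)\eqd(\xi_n)$.

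**The "only if" direction — the main point.** Fix $n$ and condition on the event $\{\xi_1=u_1,\dots,\xi_{n-1}=u_{n-1}\}$, which has probability $2^{-(n-1)}>0$. Under this conditioning $\xi_n$ is still a fair sign, so $\mathbb{E}[\phi_n(\xi_1,\dots,\xi_n)\mid \xi_1=u_1,\dots,\xi_{n-1}=u_{n-1}]=\tfrac12\phi_n(u_1,\dots,u_{n-1},+1)+\tfrac12\phi_n(u_1,\dots,u_{n-1},-1)$. The martingale-difference condition makes this zero, so $\phi_n(u_1,\dots,u_{n-1},+1)=-\phi_n(u_1,\dots,u_{n-1},-1)$. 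Since each side lies in $\{-1,+1\}$, this is exactly the assertion that, as a function of $u_n$, $\phi_n(u_1,\dots,u_{n-1},u_n)$ is odd, i.e. equals $\psi_{n-1}(u_1,\dots,u_{n-1})\,u_n$ where $\psi_{n-1}(u_1,\dots,u_{n-1}):=\phi_n(u_1,\dots,u_{n-1},+1)\in\{-1,+1\}$. For $n=1$ this reads $\phi_1(u_1)=\psi_0 u_1$ with $\psi_0:=\phi_1(+1)\in\{-1,+1\}$. This yields the claimed form. The only genuine subtlety is being careful that "$(\eta_n)\eqd(\xi_n)$" (equality of laws of the whole sequence) is correctly translated into the pointwise martingale-difference identity; I would make that translation explicit via the induction mentioned above (matching finite-dimensional distributions), since without it one might only extract a weaker marginal statement. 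Everything else is a finite computation on $\{-1,+1\}^n$, so I do not anticipate further obstacles.
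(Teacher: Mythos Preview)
The paper does not give a detailed proof of this proposition; it simply remarks, just before the statement, that it is a ``direct consequence of the martingale representation property for the simple symmetric random walk'' and cites \cite{Williams91}. Your argument is essentially an unpacking of that representation by hand, so the approaches coincide in spirit.

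There is one point in your plan that does not work as written. In the ``only if'' direction you want to pass from $\mathbb{E}[\eta_n\mid\eta_1,\dots,\eta_{n-1}]=0$ (which follows from $(\eta_n)$ being i.i.d.\ fair signs) to $\mathbb{E}[\eta_n\mid\mathcal{F}_{n-1}]=0$, and you say this is done ``via the tower property''. But the tower property goes the other way: from $\sigma(\eta_1,\dots,\eta_{n-1})\subseteq\mathcal{F}_{n-1}$ one can only conclude $\mathbb{E}\big[\mathbb{E}[\eta_n\mid\mathcal{F}_{n-1}]\,\big|\,\eta_1,\dots,\eta_{n-1}\big]=0$, not that $\mathbb{E}[\eta_n\mid\mathcal{F}_{n-1}]$ itself vanishes. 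What you actually need is that the two $\sigma$-algebras are \emph{equal}. This follows once you observe that $(\eta_1,\dots,\eta_{n-1})\eqd(\xi_1,\dots,\xi_{n-1})$ forces the map $(u_1,\dots,u_{n-1})\mapsto(\phi_1(u_1),\dots,\phi_{n-1}(u_1,\dots,u_{n-1}))$ to preserve the uniform measure on the finite set $\{-1,+1\}^{n-1}$, hence to be a bijection, hence $\sigma(\eta_1,\dots,\eta_{n-1})=\mathcal{F}_{n-1}$. (The paper records precisely this invertibility in the paragraph immediately following the proposition.) With that correction your inductive scheme goes through cleanly. Alternatively, the bijectivity observation on $\{-1,+1\}^n$ already gives the ``only if'' direction in one line: a bijection cannot send both $(u_1,\dots,u_{n-1},+1)$ and $(u_1,\dots,u_{n-1},-1)$ to points with the same last coordinate, so $\phi_n(u_1,\dots,u_{n-1},+1)=-\phi_n(u_1,\dots,u_{n-1},-1)$.
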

\end{mdframed}

We note that the map on $\{-1,+1\}^n$ defined by \eqref{genphi} is invertible; that is for any $(v_1,\ldots,v_n)\in\{-1,+1\}^n$, there exists a unique $(u_1,\ldots,u_n)\in\{-1,+1\}^n$ such that $v_1=\psi_0u_1$ and for any $1<k\le n$, $v_k=\psi_{k-1}(u_1,\ldots,u_{k-1})u_k$.
It follows that the filtrations generated by $(\eta_n)_{n\ge1}$ and $(\xi_n)_{n\ge1}$ are identical.

Next, we parametrise the functions $\psi_n$ in terms of the $\max$ function as a ``building block''. To this end, we introduce the following notations. $\mathbb{K}(n)=\mathcal{P}(\{1,\ldots,n\})$ denotes the power set of $\{1,\ldots,n\}$ and for $(u_1,\ldots,u_n)\in\{-1,+1\}^n$:
$$u_{[\emptyset]}=-1\mbox{ and for }K\in\mathbb{K}(n)\setminus\{\emptyset\}, u_{[K]}=\max_{k\in K}u_k.$$
\begin{mdframed}
[style=MyFrame1]
\begin{proposition}\label{genphimaxth}
A function $\psi$ on $\{-1,+1\}^n$ takes values in $\{-1,+1\}$ if and only if it can be written as
\begin{equation}\label{genphimax}
\psi(u_1,\ldots,u_n) = \prod_{K\in\mathbb{K}(n)}u_{[K]}^{\beta_K}
\end{equation}
where $\beta_K\in\{0,1\}$.
Furthermore, this representation is unique.
\end{proposition}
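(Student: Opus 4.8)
The plan is to think of a $\{-1,+1\}$-valued function $\psi$ on $\{-1,+1\}^n$ as, equivalently, a $\{0,1\}$-valued function, and then to recognize \eqref{genphimax} as the multiplicative analogue of the standard $\mathbb{F}_2$-Fourier (Reed--Muller / inclusion--exclusion) expansion. Concretely, write $u_k = (-1)^{x_k}$ with $x_k\in\{0,1\}$, so that $u_{[K]} = \max_{k\in K} u_k = (-1)^{\prod_{k\in K} x_k}$ for $K\neq\emptyset$, since the max of the $u_k$ equals $+1$ unless every $u_k=-1$, i.e.\ unless every $x_k=1$. Thus $u_{[K]}^{\beta_K} = (-1)^{\beta_K \prod_{k\in K} x_k}$ (with the convention $\prod_{k\in\emptyset}x_k = 1$, matching $u_{[\emptyset]}=-1$), and writing $\psi = (-1)^g$ for a $\{0,1\}$-valued $g$, the identity \eqref{genphimax} becomes
\[
g(x_1,\ldots,x_n) \;=\; \sum_{K\in\mathbb{K}(n)} \beta_K \prod_{k\in K} x_k \pmod 2 .
\]
So the proposition reduces to the claim that \emph{every} function $g:\{0,1\}^n\to\{0,1\}$ has a unique representation as an $\mathbb{F}_2$-polynomial that is multilinear (each variable appears to at most the first power, automatic since $x_k^2=x_k$ on $\{0,1\}$).

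For the existence half I would give the inclusion--exclusion formula for the coefficients directly: for $S\subseteq\{1,\ldots,n\}$ set $\beta_S = \sum_{T\subseteq S} g(\mathbf{1}_T) \pmod 2$, where $\mathbf{1}_T$ is the indicator vector of $T$, and then verify by a Möbius-inversion computation over the Boolean lattice (with all arithmetic in $\mathbb{F}_2$) that $\sum_{K\subseteq S}\beta_K = g(\mathbf{1}_S)$ for every $S$; since the monomial $\prod_{k\in K}x_k$ evaluated at $\mathbf{1}_S$ equals $1$ precisely when $K\subseteq S$, this shows the polynomial $\sum_K \beta_K\prod_{k\in K}x_k$ agrees with $g$ at every point of $\{0,1\}^n$. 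Translating back through $u_k=(-1)^{x_k}$ gives \eqref{genphimax}. Alternatively, and perhaps more cleanly for the write-up, I would prove existence by induction on $n$: split $\psi(u_1,\ldots,u_n)$ according to the value of $u_n$, apply the inductive representation to the two restrictions $\psi(\cdot,-1)$ and $\psi(\cdot,+1)$ on $\{-1,+1\}^{n-1}$, and combine them using that $u_{[K\cup\{n\}]} = \max(u_{[K]}, u_n)$ together with a short identity expressing $\psi$ in terms of its two restrictions and the indicator of $\{u_n=+1\}$.

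For uniqueness I would use a dimension/counting argument: the number of functions of the form \eqref{genphimax} is at most $2^{|\mathbb{K}(n)|} = 2^{2^n}$ (one binary choice per subset $K$), while the total number of $\{-1,+1\}$-valued functions on $\{-1,+1\}^n$ is exactly $2^{2^n}$; since existence shows the representation map is onto, it must be a bijection, so the representation is unique. This is the cleanest route and sidesteps any linear-algebra over $\mathbb{F}_2$. I expect the main obstacle to be purely bookkeeping: getting the $\emptyset$/empty-product conventions to line up (the paper's convention $u_{[\emptyset]}=-1$ is exactly what is needed so that the constant term $\beta_\emptyset$ can flip the global sign), and carefully handling the translation between the multiplicative $\{-1,+1\}$ picture and the additive $\mathbb{F}_2$ picture. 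None of the steps is deep; the content is just the observation that $\max$ on $\{-1,+1\}$ corresponds to $\mathbb{F}_2$-multiplication of the $\{0,1\}$-coordinates, after which this is the standard fact that the multilinear $\mathbb{F}_2$-monomials form a basis for all Boolean functions.
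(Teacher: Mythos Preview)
Your proposal is correct, but the route differs from the paper's in an interesting way. The paper argues in the opposite order: it first proves \emph{uniqueness} directly, by assuming two representations agree, letting $\mathbb{B}=\{K:\beta_K\neq\beta'_K\}$, picking a minimal $K^*\in\mathbb{B}$, and evaluating at the point $u$ with $u_k=-1$ iff $k\in K^*$ to force $\beta_{K^*}=\beta'_{K^*}$; it then deduces \emph{existence} from the same cardinality count $2^{2^n}$ that you invoke, but in the reverse direction (injective between equal-size sets implies surjective). You instead translate to $\mathbb{F}_2$ via $u_k=(-1)^{x_k}$, recognise \eqref{genphimax} as the algebraic normal form, give an explicit M\"obius-inversion formula for the coefficients to get existence, and then use the cardinality count to conclude uniqueness.

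Both arguments are short and valid; the trade-off is that your approach is more conceptual and yields the coefficients explicitly (which could be useful downstream), while the paper's is entirely self-contained and avoids introducing the $\mathbb{F}_2$ dictionary. Note also that the paper's minimal-element evaluation trick is essentially the same content as your M\"obius inversion, just phrased as a contradiction rather than a formula: evaluating at the indicator of $K^*$ isolates the coefficient $\beta_{K^*}$ because all strictly smaller $K$ have already been matched. Your handling of the $\emptyset$ convention is fine and matches the paper's.
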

\end{mdframed}
\begin{proof} We first establish the uniqueness of the representation. To this end we must solve the system of equations
\begin{equation}\label{uniqbeta}
\prod_{K\in\mathbb{K}(n)}u_{[K]}^{\beta_K} = \prod_{K\in\mathbb{K}(n)}u_{[K]}^{\beta'_K},\ \forall u_1,\ldots,u_n.
\end{equation}
Solving the equation for $u_1=\dots=u_n=1$ immediately yields $\beta_\emptyset=\beta_\emptyset'$.

Let $\mathbb{B}=\{K\in\mathbb{K}(n):\beta_K\neq\beta_K'\}$. We proceed by contradiction. Assume that $\mathbb{B}\neq\emptyset$ and let $K^*$ be a minimal element in $\mathbb{B}$ (i.e. $K^*\in\mathbb{B}$ and either $K^*$ is a singleton or for any $K\subsetneq K^*$, $\beta_K=\beta_K'$). We choose $u=(u_1,\dots,u_{n-1})$ such that $u_k=-1$ for $k\in K^*$ and $u_k=1$ for $k\not\in K^*$. For such $u$, $u_{[K]}=1$ unless $K\subset K^*$, and \eqref{uniqbeta} reduces to
$$\prod_{\substack{K\in\mathbb{K}(n)\\[1pt]K\subset K^*}}u_{[K]}^{\beta_K} = \prod_{\substack{K\in\mathbb{K}(n)\\[1pt]K\subset K^*}}u_{[K]}^{\beta'_K}.$$
Since $K^*$ is minimal in $\mathbb{B}$, we deduce that we must have $\beta_{n,K^*}=\beta'_{n,K^*}$, which contradicts the assumption that $K^*\in\mathbb{B}$.

To establish that every function on $\{-1,+1\}^n$ taking values in $\{-1,+1\}$ is of the form \eqref{genphimax}, we show that the two sets of functions have the same cardinality.
Indeed, there are $2^{2^n}$ functions $\psi:\{-1,+1\}^n\longrightarrow\{-1,+1\}$ and there are as many choices of $\beta_K$, $K\in\mathbb{K}(n)$.
\end{proof}

While the use of the $\max$ function in \eqref{genphimax} is natural (and in some way canonical), it is not the only ``building block'' one can use to represent functions $\psi:\{-1,+1\}^n\longrightarrow\{-1,+1\}$. We present here a generic way for constructing a representation of the form \eqref{genphimax}. We start by labeling all elements of $\{-1,+1\}^n$. We do so with the use of the sets $K\in\mathbb{K}(n)$ so that elements of $\{-1,+1\}^n$ are written as $u_K$, $K\in\mathbb{K}(n)$. Next we choose a partial order, $\prec$, on $\mathbb{K}(n)$ (which in turn induces a partial order on $\{-1,+1\}^n$). Finally, we let $g_K$, $K\in\mathbb{K}(n)$, be the family of functions on $\{-1,+1\}^n$ taking values in $\{-1,+1\}$, such that for any $K,K'\in\mathbb{K}(n)$, $g_K(u_{K'})=-1$ if and only if $K\prec K'$. Then any function $\psi$ on $\{-1,+1\}^n$ taking values in $\{-1,+1\}$ can be written as
$$\psi(u) = \prod_{K\in\mathbb{K}(n)}g_K(u)^{\beta_K},$$
where $\beta_K\in\{0,1\}$. This representation is unique in the sense that there is a one-to-one correspondence between the functions $\psi$ and the sequences $\beta_K$.
The proof of this statement is an immediate adaptation of the proof of Proposition \ref{genphimaxth}.

In other words, any partial order on $\mathbb{K}(n)$ (and labelling of $\{-1,+1\}^n$) defines a new set of building blocks $g_K$, $K\in\mathbb{K}(n)$, that can be used to produce a representation of the form \eqref{genphimax}.

The $\max$ function is obtained by labeling $u\in\{-1,+1\}^n$ with the set of indices carrying the value $-1$, $K=\{k:u_k=-1\}$, and using the usual inclusion as a partial order on $\mathbb{K}(n)$.

The $\min$ function can also be used as can easily be seen by replacing $u$ with $-u$ in the $\max$ representation, and making all necessary adjustments.

Another example can be constructed by thinking of $\mathbb{K}(n)$ (or equivalently of $\{-1,+1\}^n$) as a totally unordered set. Then one can simply use the functions
$$g_K(u) = \left\{
\begin{array}{ll}
-1 & \mbox{if }u=u_K\\
1 & \mbox{if }u\neq u_K
\end{array}\right.$$

The dependence between the random variables $\xi_{[K]}$, for various $K$'s, renders the computation of quantities such as $\mathbb{E}\Big[\prod_{K\in\mathbb{K}(n)}\xi_{[K]}^{\beta_K}\Big]$ cumbersome. The next proposition enables a linearisation of the product $\prod_{K\in\mathbb{K}(n)}\xi_{[K]}^{\beta_K}$ and therefore a better handle on its expectation.
\begin{mdframed}
[style=MyFrame1]
\begin{proposition}\label{genphilinearprop}
For any collection of sets of integers $M_1,\ldots,M_m$,
$$\prod_{k=1}^mu_{[M_k]} = \frac12(-1)^m - \frac12\sum_{k=0}^m(-2)^k\sum_{\substack{K\in\mathbb{K}(m)\\[1pt]|K|=k}}u_{[M_{[K]}]},$$
where $M_{[K]}=\bigcup_{j\in K}M_j$ and $M_{[\emptyset]}=\emptyset$.
\end{proposition}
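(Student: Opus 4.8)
The plan is to reduce the identity to a routine binomial computation by passing from the $\pm1$-valued quantities $u_{[M]}$ to $\{0,1\}$-valued ones. Concretely, for any set of integers $M$ I would write $u_{[M]}=2v_M-1$, so that $v_M\in\{0,1\}$, with the convention $v_\emptyset=0$ matching $u_{[\emptyset]}=-1$. Since $u_{[M\cup M']}=\max\bigl(u_{[M]},u_{[M']}\bigr)$ and the maximum of two $\pm1$ numbers corresponds to the Boolean ``or'' of the associated $0/1$ values, this gives $v_{M\cup M'}=v_M\vee v_{M'}$, and hence $v_{M_{[K]}}=\bigvee_{j\in K}v_{M_j}$ for every $K\in\mathbb{K}(m)$ (the case $K=\emptyset$ giving the empty ``or'', which is $0=v_\emptyset$, consistently with $M_{[\emptyset]}=\emptyset$).

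Next I would substitute $u_{[M_{[K]}]}=2v_{M_{[K]}}-1$ into the right-hand side (noting that $\sum_{k=0}^m\sum_{|K|=k}$ is just $\sum_{K\in\mathbb{K}(m)}$) and collect terms. Using $\sum_{K\in\mathbb{K}(m)}(-2)^{|K|}=\sum_{k=0}^m\binom{m}{k}(-2)^k=(1-2)^m=(-1)^m$, the constant contributions combine into $(-1)^m$, and since the $K=\emptyset$ term carries $v_\emptyset=0$ the assertion becomes the purely algebraic identity
$$\prod_{k=1}^m(2v_{M_k}-1)\;=\;(-1)^m-\sum_{\emptyset\ne K\in\mathbb{K}(m)}(-2)^{|K|}\bigvee_{j\in K}v_{M_j},$$
to be established for arbitrary $v_{M_1},\dots,v_{M_m}\in\{0,1\}$.

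To prove this I would linearize the ``or'' by inclusion--exclusion, $\bigvee_{j\in K}v_{M_j}=1-\prod_{j\in K}(1-v_{M_j})$, and split the sum in two. The first piece is again $\sum_{\emptyset\ne K}(-2)^{|K|}=(-1)^m-1$. For the second, the elementary expansion $\sum_{K\in\mathbb{K}(m)}\prod_{j\in K}x_j=\prod_{k=1}^m(1+x_k)$ with $x_k=-2(1-v_{M_k})$ yields $\sum_{\emptyset\ne K}(-2)^{|K|}\prod_{j\in K}(1-v_{M_j})=\prod_{k=1}^m\bigl(2v_{M_k}-1\bigr)-1$. Subtracting, the sum on the right-hand side of the displayed identity equals $(-1)^m-\prod_{k=1}^m(2v_{M_k}-1)$, and plugging back in leaves exactly $\prod_{k=1}^m(2v_{M_k}-1)=\prod_{k=1}^m u_{[M_k]}$, which is the left-hand side of the proposition.

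There is no genuine obstacle here: the only thing one must spot is the change of variables $u_{[M]}=2v_M-1$, under which unions become Boolean ``or'', after which everything is the binomial theorem together with $\bigvee=1-\prod(1-\cdot)$. One could instead attempt induction on $m$ by peeling off the factor $u_{[M_m]}$, but then each product $u_{[M_m]}\,u_{[M_{[K]}]}$ would itself need re-linearizing, so the direct expansion above is cleaner, and that is the version I would write up.
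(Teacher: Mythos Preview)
Your argument is correct, and it takes a genuinely different route from the paper's.

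The paper proceeds by induction on $m$: it peels off the last factor $u_{[M_m]}$ and re-expresses each product $u_{[M_{[K]}]}\,u_{[M_m]}$ via the two-variable identity $ab=1+a+b-2\max(a,b)$ for $a,b\in\{-1,+1\}$, i.e.\ $u_{[A]}u_{[B]}=1+u_{[A]}+u_{[B]}-2u_{[A\cup B]}$. After that, one combines the binomial identity $\sum_{k=0}^{m-1}(-2)^k\binom{m-1}{k}=(-1)^{m-1}$ with the decomposition of $\mathbb{K}(m)$ into $\mathbb{K}(m-1)$ and sets of the form $K'\cup\{m\}$. So the paper does precisely the ``peel off $u_{[M_m]}$'' induction you mention in your last paragraph, accepting the re-linearization cost you anticipated.

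Your approach avoids the induction entirely: the substitution $u_{[M]}=2v_M-1$ turns the max/union structure into Boolean ``or'', after which $\bigvee=1-\prod(1-\cdot)$ and the product expansion $\sum_{K}\prod_{j\in K}x_j=\prod_k(1+x_k)$ finish the job in one stroke. This is cleaner and makes transparent why the coefficients are exactly $(-2)^{|K|}$; the paper's induction, on the other hand, has the virtue of working entirely with the $\pm1$ variables that the rest of the article uses. Either way the content is the same binomial identity, just packaged differently.
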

\end{mdframed}
\begin{proof}
The identity is clearly true for $m=1$:
$\displaystyle -\frac12 + \frac12 - \frac12(-2)u_{[M_1]} = u_{[M_1]}$.
Suppose the identity true for $m-1$. Then,
\begin{eqnarray*}
\lefteqn{\prod_{k=1}^mu_{[M_k]} = \left(\prod_{k=1}^{m-1}u_{[M_k]}\right)u_{[M_m]}}\\
& = & \frac12(-1)^{m-1}u_{[M_m]} - \frac12\sum_{k=0}^{m-1}(-2)^k\sum_{\substack{K\in\mathbb{K}(m-1)\\[1pt]|K|=k}}u_{[M_{[K]}]}u_{[M_m]}\\
& = & \frac12(-1)^{m-1}u_{[M_m]} - \frac12\sum_{k=0}^{m-1}(-2)^k\sum_{\substack{K\in\mathbb{K}(m-1)\\[1pt]|K|=k}}\left(1 + u_{[M_{[K]}]} + u_{[M_m]} - 2u_{[M_{[K]}\cup M_m]}\right).
\end{eqnarray*}
Using the facts that
$$\sum_{k=0}^{m-1}(-2)^k{m-1\choose k} = (-1)^{m-1}$$
and any $K\in\mathbb{K}(m)$ is either in $\mathbb{K}(m-1)$ or is of the form $K'\cup\{m\}$, where $K'\in\mathbb{K}(m-1)$, we see that
\begin{eqnarray*}
\prod_{k=1}^mu_{[M_k]} & = & -\frac12(-1)^{m-1} - \frac12\sum_{k=0}^{m-1}(-2)^k\sum_{\substack{K\in\mathbb{K}(m-1)\\[1pt]|K|=k}}\left(u_{[M_{[K]}]} - 2u_{[M_{[K]}\cup M_m]}\right)\\
& = & \frac12(-1)^m - \frac12\sum_{k=0}^m(-2)^k\sum_{\substack{K\in\mathbb{K}(m)\\[1pt]|K|=k}}u_{[M_{[K]}]}.
\end{eqnarray*}
\end{proof}

Combining Propositions \ref{genphiprop}, \ref{genphimaxth} and \ref{genphilinearprop}, we obtain the following theorem.
\begin{mdframed}
[style=MyFrame1]
\begin{theorem}\label{genphith}
Let $\eta_n=\phi_n(\xi_1,\dots,\xi_n)$, then $(\eta_n)_{n\ge1}\stackrel{d}{=}(\xi_n)_{n\ge1}$ if and only if for each $n$ and each $K\in\mathbb{K}(n-1)$, there exists $\beta_{n,K}\in\{0,1\}$ such that
\begin{equation}\label{genphibeta}
\phi_n(u_1,\ldots,u_n) = \left(\prod_{K\in\mathbb{K}(n-1)}u_{[K]}^{\beta_{n,K}}\right)u_n.
\end{equation}
Furthermore, for such functions
$$\phi_n(u_1,\ldots,u_n) = \left(\frac12(-1)^{|\mathcal{B}(n)|} - \frac12\sum_{H\in\mathcal{P}(\mathcal{B}(n))}(-2)^{|H|}u_{[\left<H\right>]}\right)u_n.$$
where $\mathcal{B}(n) = \{K\in\mathbb{K}(n-1):\beta_{n,K}=1\}$ and for $H=\{K_1,\ldots,K_h\}\in\mathcal{P}(\mathcal{B}(n))$, $\left<H\right>=\bigcup_{j=1}^hK_j$.
\end{theorem}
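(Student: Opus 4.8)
The plan is to obtain Theorem~\ref{genphith} by chaining together the three preceding propositions, treating the characterisation \eqref{genphibeta} and the closed-form expression separately.

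For \eqref{genphibeta} I would argue as follows. By Proposition~\ref{genphiprop}, $(\eta_n)_{n\ge1}\stackrel{d}{=}(\xi_n)_{n\ge1}$ holds if and only if, for every $n$, $\phi_n(u_1,\ldots,u_n)=\psi_{n-1}(u_1,\ldots,u_{n-1})u_n$ for some $\{-1,+1\}$-valued function $\psi_{n-1}$ on $\{-1,+1\}^{n-1}$ (with $\psi_0\in\{-1,+1\}$). Now apply Proposition~\ref{genphimaxth} with $n$ replaced by $n-1$: such a $\psi_{n-1}$ is $\{-1,+1\}$-valued if and only if $\psi_{n-1}(u_1,\ldots,u_{n-1})=\prod_{K\in\mathbb{K}(n-1)}u_{[K]}^{\beta_{n,K}}$ for some (unique) choice of $\beta_{n,K}\in\{0,1\}$, $K\in\mathbb{K}(n-1)$. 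Substituting into the previous display gives exactly \eqref{genphibeta}; the degenerate case $n=1$ is consistent, since $\mathbb{K}(0)=\{\emptyset\}$ and $u_{[\emptyset]}=-1$, so $\psi_0=(-1)^{\beta_{1,\emptyset}}\in\{-1,+1\}$.

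For the closed form, note first that $u_{[K]}^0=1$ and $u_{[K]}^1=u_{[K]}$, so the bracket in \eqref{genphibeta} equals $\prod_{K\in\mathcal{B}(n)}u_{[K]}$ with $\mathcal{B}(n)=\{K\in\mathbb{K}(n-1):\beta_{n,K}=1\}$. Write $\mathcal{B}(n)=\{M_1,\ldots,M_m\}$ with the $M_j$ pairwise distinct and $m=|\mathcal{B}(n)|$, and apply Proposition~\ref{genphilinearprop}:
$$\prod_{j=1}^m u_{[M_j]}=\frac12(-1)^m-\frac12\sum_{k=0}^m(-2)^k\sum_{\substack{K\in\mathbb{K}(m)\\|K|=k}}u_{[M_{[K]}]}.$$
The remaining step is to re-index the right-hand side: the map $K\mapsto H:=\{M_j:j\in K\}$ is a bijection from $\mathbb{K}(m)=\mathcal{P}(\{1,\ldots,m\})$ onto $\mathcal{P}(\mathcal{B}(n))$ (injective because the $M_j$ are distinct), under which $|H|=|K|$ and $M_{[K]}=\bigcup_{j\in K}M_j=\left<H\right>$, and the conventions match at the bottom ($K=\emptyset\leftrightarrow H=\emptyset$, $M_{[\emptyset]}=\left<\emptyset\right>=\emptyset$, $u_{[\emptyset]}=-1$). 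Hence the double sum over $k$ and $K$ collapses to a single sum over $H\in\mathcal{P}(\mathcal{B}(n))$, and multiplying through by $u_n$ produces
$$\phi_n(u_1,\ldots,u_n)=\left(\frac12(-1)^{|\mathcal{B}(n)|}-\frac12\sum_{H\in\mathcal{P}(\mathcal{B}(n))}(-2)^{|H|}u_{[\left<H\right>]}\right)u_n,$$
as claimed (the case $m=0$, i.e. $\psi_{n-1}\equiv1$ and $\phi_n=u_n$, is immediate).

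Since every step is a direct invocation of an already proved proposition, there is no substantial obstacle; the only point requiring care is the re-indexing from subsets of $\{1,\ldots,m\}$ to subsets of $\mathcal{B}(n)$ together with the empty-set conventions. In particular one should note that although distinct $K$'s may yield the same union $M_{[K]}$, this is harmless: Proposition~\ref{genphilinearprop} is an identity valid for an arbitrary (possibly ``redundant'') family $M_1,\ldots,M_m$, and we keep the sum indexed by $H$ rather than by the distinct unions it produces.
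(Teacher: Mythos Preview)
Your proposal is correct and follows exactly the route the paper takes: the theorem is stated there as an immediate combination of Propositions~\ref{genphiprop}, \ref{genphimaxth} and \ref{genphilinearprop}, with no further argument given. Your write-up simply makes explicit the re-indexing from $\mathbb{K}(m)$ to $\mathcal{P}(\mathcal{B}(n))$ and the handling of the empty-set conventions, which the paper leaves implicit.
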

\end{mdframed}
For any set of integers $M$, we let $q(M) = \mathbb{P}(\xi_{[M]}=-1) = 2^{-|M|}$. Then $\mathbb{E}[\xi_{[M]}] = 1-2q(M)$.
\begin{mdframed}
[style=MyFrame1]
\begin{corollary}
Let $\zeta_{k-1} = \eta_k\xi_k = \prod_{K\in\mathbb{K}(k-1)}\xi_{[K]}^{\beta_{k,K}}$. Then for any $k\geq2$,
$$\mathbb{E}[\zeta_{k-1}] = \sum_{H\in\mathcal{P}(\mathcal{B}(k))}(-2)^{|H|}q(\left<H\right>)$$
and any $k\neq\ell$,
$$\mathbb{E}[\zeta_{k-1}\zeta_{\ell-1}] = \sum_{\substack{H\in\mathcal{P}(\mathcal{B}(k))\\[1pt]J\in\mathcal{P}(\mathcal{B}(\ell))}}(-2)^{|H|+|J|}q(\left<H\right>\cup\left<J\right>).$$
\end{corollary}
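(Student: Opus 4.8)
The plan is to read both identities off Theorem \ref{genphith}, using only linearity of expectation together with the elementary fact that $\mathbb{E}[\xi_{[M]}]=1-2q(M)$ for every finite set $M$ of positive integers (this is consistent with the conventions $\xi_{[\emptyset]}=-1$, $q(\emptyset)=1$, $\langle\emptyset\rangle=\emptyset$). The first step is to put $\zeta_{k-1}$ in linearised form. Since $\xi_k^2=1$, representation \eqref{genphibeta} gives $\zeta_{k-1}=\eta_k\xi_k=\prod_{K\in\mathcal{B}(k)}\xi_{[K]}$, and the displayed formula in Theorem \ref{genphith} (which is exactly Proposition \ref{genphilinearprop} applied to the family $\mathcal{B}(k)$) then yields
$$\zeta_{k-1}=\frac12(-1)^{|\mathcal{B}(k)|}-\frac12\sum_{H\in\mathcal{P}(\mathcal{B}(k))}(-2)^{|H|}\xi_{[\langle H\rangle]}.$$

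Taking expectations and inserting $\mathbb{E}[\xi_{[\langle H\rangle]}]=1-2q(\langle H\rangle)$ splits the right-hand side into the desired $q$-part $\sum_{H}(-2)^{|H|}q(\langle H\rangle)$ and a constant part $\frac12(-1)^{|\mathcal{B}(k)|}-\frac12\sum_{H}(-2)^{|H|}$. The latter vanishes because $\sum_{H\in\mathcal{P}(\mathcal{B}(k))}(-2)^{|H|}=(1-2)^{|\mathcal{B}(k)|}=(-1)^{|\mathcal{B}(k)|}$ — the same signed binomial identity already used in the proof of Proposition \ref{genphilinearprop} — and the first formula follows.

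For the second moment I would write $\zeta_{k-1}\zeta_{\ell-1}=\prod_{K\in\mathcal{B}(k)}\xi_{[K]}\cdot\prod_{K'\in\mathcal{B}(\ell)}\xi_{[K']}$ as a single product over the concatenated list of $|\mathcal{B}(k)|+|\mathcal{B}(\ell)|$ sets (with repetitions, which occur precisely when a set lies in both $\mathcal{B}(k)$ and $\mathcal{B}(\ell)$) and apply Proposition \ref{genphilinearprop} to that list. The proposition is valid for a list with repeated sets: its inductive proof uses only the identity $u_{[A]}u_{[B]}=1+u_{[A]}+u_{[B]}-2u_{[A\cup B]}$, which holds for arbitrary sets $A,B$. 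A subset of the index set of the concatenated list corresponds uniquely to a pair $(H,J)$ with $H\in\mathcal{P}(\mathcal{B}(k))$ and $J\in\mathcal{P}(\mathcal{B}(\ell))$, the union of the selected sets being $\langle H\rangle\cup\langle J\rangle$ and its cardinality $|H|+|J|$; hence
$$\zeta_{k-1}\zeta_{\ell-1}=\frac12(-1)^{|\mathcal{B}(k)|+|\mathcal{B}(\ell)|}-\frac12\sum_{\substack{H\in\mathcal{P}(\mathcal{B}(k))\\ J\in\mathcal{P}(\mathcal{B}(\ell))}}(-2)^{|H|+|J|}\xi_{[\langle H\rangle\cup\langle J\rangle]}.$$
Taking expectations, the constant term cancels once more, now via $\bigl(\sum_H(-2)^{|H|}\bigr)\bigl(\sum_J(-2)^{|J|}\bigr)=(-1)^{|\mathcal{B}(k)|+|\mathcal{B}(\ell)|}$, which leaves the second formula. (Alternatively, one may multiply the two linearised expansions of $\zeta_{k-1}$ and $\zeta_{\ell-1}$ and linearise each cross term $\xi_{[\langle H\rangle]}\xi_{[\langle J\rangle]}$ by the $m=2$ case of Proposition \ref{genphilinearprop}; the bookkeeping is a little longer but equivalent.)

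There is no substantial obstacle here: the corollary is essentially a bookkeeping consequence of the linearisation. The only two points worth stating with some care are the cancellation of the constant terms (the identity $\sum_{H\subseteq S}(-2)^{|H|}=(-1)^{|S|}$) and the observation that Proposition \ref{genphilinearprop} remains valid for a collection of sets allowing repetitions, which is what licenses treating $\mathcal{B}(k)$ and $\mathcal{B}(\ell)$ together as one list in the computation of $\mathbb{E}[\zeta_{k-1}\zeta_{\ell-1}]$.
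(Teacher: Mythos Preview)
Your argument is correct and is exactly the computation the paper intends: the corollary is stated without proof, as an immediate consequence of Theorem \ref{genphith} together with $\mathbb{E}[\xi_{[M]}]=1-2q(M)$. Your careful remarks about the cancellation $\sum_{H\subseteq S}(-2)^{|H|}=(-1)^{|S|}$ and about Proposition \ref{genphilinearprop} remaining valid for lists with repeated sets fill in precisely the details the paper leaves implicit.
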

\end{mdframed}
\section{The Gaussian case}\label{Section:Gauss}

Let $\displaystyle U^{(n)}_t=\frac1{\sqrt{n}}X_{\lfloor nt\rfloor}\mbox{ and }V^{(n)}_t=\frac1{\sqrt{n}}Y_{\lfloor nt\rfloor}$. It is well-known that $U^{(n)}_t$, $t\in[0,1]$, converges weakly to a standard Brownian motion, and the same is also trivially true for $V^{(n)}_t$, $t\in[0,1]$. In this section we give necessary and sufficient conditions on the parameters of the GBRW for the two-dimensional process $W^{(n)}_t=(U^{(n)}_t,V^{(n)}_t)$, $t\in[0,1]$, to converge to a two-dimensional Brownian motion.

\subsection{The main result}
\begin{mdframed}
[style=MyFrame1]
\begin{theorem}\label{mainth}
$W^{(n)}$ converges weakly to a two-dimensional Brownian motion (possibly degenerate) with correlation $\rho$ if and only if
\begin{enumerate}
\item[(A)] $\bds\rho=\lim_{n\to\infty}\frac1n\sum_{k=1}^n\sum_{H\in\mathcal{P}(\mathcal{B}(k))}(-2)^{|H|}q(\left<H\right>)\eds$ exists;
\item[(B)] $\bds\lim_{n\to\infty}\frac1{n^2}\sum_{\ell=1}^n\sum_{k=1}^n\sum_{\substack{H\in\mathcal{P}(\mathcal{B}(k))\\[1pt]J\in\mathcal{P}(\mathcal{B}(\ell))}}(-2)^{|H|+|J|}q(\left<H\right>\cup\left<J\right>) = \rho^2\eds$.
\end{enumerate}
\end{theorem}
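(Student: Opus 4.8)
The plan is to reformulate the theorem as a convergence statement for the rescaled quadratic covariation of the two-dimensional martingale $(X_n,Y_n)_{n\ge0}$, and then to apply the martingale functional central limit theorem (FCLT) in the ``if'' direction and its converse in the ``only if'' direction. First, record the structure: $(X_n,Y_n)_{n\ge0}$ is a martingale for $\mathcal F_n=\sigma(\xi_1,\dots,\xi_n)$ with increments $(\xi_k,\eta_k)=(\xi_k,\zeta_{k-1}\xi_k)$ bounded by $1$, so $W^{(n)}$ is a sequence of rescaled martingales with jumps of size at most $\sqrt2/\sqrt n\to0$ and the jump (Lindeberg-type) condition of the martingale FCLT holds automatically. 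Its quadratic covariation matrix (the predictable and optional versions coincide, since $\xi_k\eta_k=\zeta_{k-1}$ is $\mathcal F_{k-1}$-measurable) has deterministic diagonal $\lfloor nt\rfloor/n\to t$ and off-diagonal entry
$$A^{(n)}_t:=[U^{(n)},V^{(n)}]_t=\frac1n\sum_{k=1}^{\lfloor nt\rfloor}\zeta_{k-1},\qquad |A^{(n)}_1|\le1.$$
By the Corollary preceding the theorem, $\mathbb E[A^{(n)}_1]=\frac1n\sum_{k=1}^n\mathbb E[\zeta_{k-1}]$ is exactly the average appearing in (A) and $\mathbb E[(A^{(n)}_1)^2]=\frac1{n^2}\sum_{k,\ell=1}^n\mathbb E[\zeta_{k-1}\zeta_{\ell-1}]$ is exactly the quantity in (B). Since $|A^{(n)}_1|\le1$, imposing both (A) and (B) is equivalent to $\mathbb E[A^{(n)}_1]\to\rho$ together with $\mathbb E[(A^{(n)}_1-\rho)^2]\to0$, i.e. to $A^{(n)}_1\to\rho$ in $L^2$, equivalently (by boundedness) in probability; and the self-similar identity $A^{(n)}_t=\tfrac{\lfloor nt\rfloor}{n}A^{(\lfloor nt\rfloor)}_1$ promotes this to $A^{(n)}_t\to\rho t$ in probability for every $t\in[0,1]$. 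Thus it suffices to prove that $W^{(n)}$ converges weakly to a two-dimensional Brownian motion of correlation $\rho$ if and only if $A^{(n)}_t\to\rho t$ in probability for all $t$.

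The ``if'' implication is then precisely the martingale FCLT: the jump condition holds, the diagonal brackets converge deterministically to $t$, and the off-diagonal bracket converges in probability to $\rho t$, so $W^{(n)}$ converges weakly to the continuous Gaussian martingale with covariance matrix $\left(\begin{smallmatrix}t&\rho t\\\rho t&t\end{smallmatrix}\right)$, which is a two-dimensional Brownian motion of correlation $\rho$; this is degenerate exactly when $|\rho|=1$, in which case $V=\pm U$.

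For the ``only if'' implication, suppose $W^{(n)}$ converges weakly to such a $W=(W^1,W^2)$. The processes $A^{(n)}$ are uniformly bounded and, uniformly in $n$, essentially $1$-Lipschitz, hence tight, so along a subsequence $(W^{(n)},A^{(n)})$ converges weakly to some $(W,\mathcal A)$. Since a simple random walk satisfies $\mathbb E[X_n^{2p}]=O(n^p)$, the products $U^{(n)}_tV^{(n)}_t$ are bounded in $L^2$ uniformly in $n$; this uniform integrability, together with $|A^{(n)}|\le1$, lets one pass the identity ``$U^{(n)}V^{(n)}-A^{(n)}$ is a martingale for the filtration generated by $(U^{(n)},V^{(n)})$'' to the weak limit, so that $W^1W^2-\mathcal A$ is a martingale for the filtration generated by $(W^1,W^2,\mathcal A)$; likewise $(W^i_t)^2-t$ are martingales, so $(W^1,W^2)$ is a continuous martingale for that enlarged filtration. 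As $\mathcal A$ is continuous, of finite variation and null at $0$, uniqueness of the compensator forces $\mathcal A_t=\langle W^1,W^2\rangle_t=[W^1,W^2]_t$, and since $W$ has the law of a two-dimensional Brownian motion of correlation $\rho$, this equals $\rho t$. Every subsequential limit being the deterministic path $t\mapsto\rho t$, we conclude $A^{(n)}_t\to\rho t$ in probability, in particular $A^{(n)}_1\to\rho$, and the reduction above yields (A) and (B). The one genuinely delicate step is precisely this identification $\mathcal A=[W^1,W^2]$: the discrete quadratic covariation $A^{(n)}$ is not a weakly continuous functional of the path, so the continuous mapping theorem does not apply, and one must instead transport the ``compensated product is a martingale'' relation to the weak limit via the uniform-integrability estimates and then invoke uniqueness of the Doob--Meyer decomposition (equivalently, L\'evy's characterisation). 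The remaining ingredients --- the martingale FCLT in its standard array form and the boundedness/Ces\`aro manipulations of the reduction --- are routine.
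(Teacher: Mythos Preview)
Your argument is correct. For the reduction of conditions (A)--(B) to $L^2$-convergence of $A^{(n)}_1=[U^{(n)},V^{(n)}]_1$ and for the ``if'' direction via the martingale FCLT, you do exactly what the paper does (the paper phrases the reduction as a direct expansion of $\mathbb E[(A^{(n)}_t-\rho t)^2]$ for each $t$, while you use the identity $A^{(n)}_t=\tfrac{\lfloor nt\rfloor}{n}A^{(\lfloor nt\rfloor)}_1$ to reduce to $t=1$; these are the same computation).

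The genuine difference is in the ``only if'' direction. The paper invokes Theorem~VIII.3.11 of Jacod--Shiryaev as a two-sided statement: convergence of $W^{(n)}$ to a correlated planar Brownian motion is \emph{equivalent} to convergence in probability of $[U^{(n)},V^{(n)}]_t$ to $\rho t$, and then reads off (A) and (B) from bounded convergence. You instead reprove this converse by hand: tightness of $A^{(n)}$, passage of the martingale identity $U^{(n)}V^{(n)}-A^{(n)}$ to any subsequential weak limit via uniform integrability, and identification of the limiting $\mathcal A$ with $[W^1,W^2]=\rho t$ through uniqueness of the Doob--Meyer decomposition in the enlarged filtration. This is sound; the key point you rightly flag --- that $W$ remains a continuous martingale in the filtration generated by $(W,\mathcal A)$, so that $\langle W^1,W^2\rangle=[W^1,W^2]=\rho t$ there as well --- is what makes the identification legitimate. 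The paper's route is shorter because the heavy lifting is outsourced to Jacod--Shiryaev; your route is more self-contained and exposes the mechanism (the limit law pins down the bracket), a technique that also underlies the non-Gaussian Section~\ref{Section:NonGauss} of the paper.
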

\end{mdframed}
\begin{proof}
For each $n$, $U^{(n)}$ and $V^{(n)}$ are martingales with respect to $\mathcal{F}^{(n)}_t=\sigma(\xi_1,\ldots,\xi_{\lfloor nt\rfloor})$. Furthermore
$$\Delta U^{(n)}_t = \left\{
\begin{array}{ll}
0 & nt\not\in\mathbb{N}\\
\frac1{\sqrt{n}}\xi_{nt} & nt\in\mathbb{N}
\end{array}\right.,\ |\Delta U^{(n)}_t| \leq \frac1{\sqrt{n}}|\xi_{\lfloor nt\rfloor}| \leq 1$$
and similarly for $V^{(n)}_t$.

Using Theorem VIII.3.11 of \cite{JacodShiryaev}, we see that $W^{(n)}$ converges weakly to a two-dimensional Brownian motion with correlation $\rho$ if and only if $[U^{(n)},V^{(n)}]_t$ converges in probability to $\rho t$.
However,
$$[U^{(n)},V^{(n)}]_t = \sum_{s\leq t}\Delta U^{(n)}_s\Delta V^{(n)}_s = \frac1n\sum_{k=1}^{\lfloor nt\rfloor}\xi_k\eta_k = \frac1n\sum_{k=1}^{\lfloor nt\rfloor}\zeta_{k-1}.$$

Suppose $[U^{(n)},V^{(n)}]_t$ converges in probability to $\rho t$. Since $|[U^{(n)},V^{(n)}]_t|\leq\lfloor nt\rfloor/n\leq1$, we must have
$$\rho=\lim_n\mathbb{E}\big[[U^{(n)},V^{(n)}]_1\big]=\lim_n\frac1n\sum_{k=1}^n\mathbb{E}[\zeta_{k-1}]=\lim_n\frac1n\sum_{k=1}^n\sum_{H\in\mathcal{P}(\mathcal{B}(k))}(-2)^{|H|}q(\left<H\right>),$$
from which we deduce that (A) must hold. We must also have
\begin{eqnarray*}
\rho^2 & = & \lim_n\mathbb{E}\big[[U^{(n)},V^{(n)}]_1^2\big]\ =\ \lim_n\frac1{n^2}\sum_{k=1}^n\sum_{\ell=1}^n\mathbb{E}[\zeta_{k-1}\zeta_{\ell-1}]\\
& = & \lim_n\frac1{n^2}\sum_{k=1}^n\sum_{\ell=1}^n\sum_{\substack{H\in\mathcal{P}(\mathcal{B}(k))\\[1pt]J\in\mathcal{P}(\mathcal{B}(\ell))}}(-2)^{|H|+|J|}q(\left<H\right>\cup\left<J\right>)
\end{eqnarray*}
and (B) must also hold proving the necessity of these conditions.

Next we show sufficiency. In fact we shall prove that (A) and (B) lead to an $L^2$ convergence of $[U^{(n)},V^{(n)}]_t$ to $\rho t$.

Let
$$\rho_k = \mathbb{E}[\zeta_{k-1}] = \sum_{H\in\mathcal{P}(\mathcal{B}(k))}(-2)^{|H|}q(\left<H\right>)$$
and
$$\theta_{k,\ell} = \mathbb{E}[\zeta_{k-1}\zeta_{\ell-1}] = \sum_{\substack{H\in\mathcal{P}(\mathcal{B}(k))\\[1pt]J\in\mathcal{P}(\mathcal{B}(\ell))}}(-2)^{|H|+|J|}q(\left<H\right>\cup\left<J\right>).$$
Then, assuming (A) and (B),
\begin{eqnarray*}
\lefteqn{\mathbb{E}\left[\left(\frac1n\sum_{k=1}^{\lfloor nt\rfloor}\zeta_{k-1}-\rho t\right)^2\right]}\\
& = & \frac1{n^2}\sum_{\ell=1}^{\lfloor nt\rfloor}\sum_{k=1}^{\lfloor nt\rfloor}\mathbb{E}[\zeta_{k-1}\zeta_{\ell-1}] - \frac{2\rho t}n\sum_{k=1}^{\lfloor nt\rfloor}\mathbb{E}[\zeta_{k-1}] + \rho^2t^2\\
& = & \frac1{n^2}\sum_{\ell=1}^{\lfloor nt\rfloor}\sum_{k=1}^{\lfloor nt\rfloor}\theta_{k,\ell} - \frac{2\rho t}n\sum_{k=1}^{\lfloor nt\rfloor}\rho_k + \rho^2t^2\ \underset{n\uparrow\infty}{\longrightarrow}\ \rho^2t^2 - 2\rho^2t^2 + \rho^2t^2\ =\ 0
\end{eqnarray*}
\end{proof}

The asymptotics of $W^{(n)}$ is dependent on the (asymptotic) behaviour of the families $\mathcal{B}(n)$. The following example provides settings in which the latter is easily described.

\begin{example}\label{predictable}
Suppose $\eta_k=\psi_{k-2}(\xi_1,\ldots,\xi_{k-2})\xi_{k-1}\xi_k$, where $\psi_{k-2}$ is any function on $\{-1,+1\}^{k-2}$ taking values in $\{-1,+1\}$. In this case, for any $k<\ell$,
$$\mathbb{E}[\zeta_{k-1}] = 0 \text{ and } \mathbb{E}[\zeta_{k-1}\zeta_{\ell-1}] =
\mathbb{E}[\psi_{k-2}(\xi_1,\ldots,\xi_{k-2})\xi_{k-1}
\psi_{\ell-2}(\xi_1,\ldots,\xi_{\ell-2})]\mathbb{E}[\xi_{\ell-1}] = 0.$$
It immediately follows that $W^{(n)}$ converges weakly to a pair of independent Brownian motions.
\end{example}

\subsection{The Extended Bootstrap Random Walk}
We extend the model introduced in \cite{CHS2016} and \cite{CHL2019} to the case
$\displaystyle\eta_n = \xi_n\prod_{k\in M_n}\xi_k,\ n\geq1$, where $M_n$ is not necessarily the entire set $\{1,\ldots,n-1\}$, but any subset thereof. We investigate the convergence of $W^{(n)}$ in terms of the behaviour of the sets $M_n$. Of particular interest is the case of consecutive indexes, $M_n=\{1,\ldots,\floor{R(n)}\}$, for some real function $R$. Note that the case $M_n=\{\floor{r(n)},\ldots,n-1\}$ is covered by Example \ref{predictable}.
\begin{mdframed}
[style=MyFrame1]
\begin{proposition}\label{prop:disjoint}
Suppose $\mathcal{B}(n)$ is made up of disjoint sets of equal cardinality ($\forall K_1,K_2\in\mathcal{B}(n)$, $|K_1|=|K_2|$ and $K_1\cap K_2=\emptyset$, whenever $K_1\neq K_2$). Call $\kappa$ the cardinality.
\begin{enumerate}
\item If $\kappa=1$ or if $\kappa>1$ and $\lim_n|\mathcal{B}(n)|=+\infty$, then (A) holds true with $\rho=0$.
\item If $\kappa>1$ and $\lim_n|\mathcal{B}(n)|=m<+\infty$, then (A) holds true with $\rho=\left(1-2^{1-\kappa}\right)^m$.
\end{enumerate}
\end{proposition}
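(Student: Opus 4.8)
The object is to verify condition~(A). Recall from the proof of Theorem~\ref{mainth} that, with $\rho_k:=\mathbb{E}[\zeta_{k-1}]=\sum_{H\in\mathcal{P}(\mathcal{B}(k))}(-2)^{|H|}q(\left<H\right>)$, condition~(A) is exactly the assertion that $\frac1n\sum_{k=1}^n\rho_k$ converges to the stated $\rho$. The plan is to first extract a closed form for $\rho_k$ out of the two structural hypotheses on $\mathcal{B}(n)$, and then read off the Ces\`aro limit in each regime.

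For the closed form, write $b_k:=|\mathcal{B}(k)|$ and fix $H\in\mathcal{P}(\mathcal{B}(k))$ with $|H|=h$. Since the members of $\mathcal{B}(k)$ are pairwise disjoint and each has cardinality $\kappa$, the union $\left<H\right>$ is a disjoint union of $h$ sets of size $\kappa$, so $|\left<H\right>|=h\kappa$ and $q(\left<H\right>)=2^{-h\kappa}$ depends on $H$ only through $|H|$. Grouping the sum defining $\rho_k$ by the cardinality of $H$ and using the binomial theorem then gives
\[
\rho_k=\sum_{h=0}^{b_k}{b_k\choose h}(-2)^h\,2^{-h\kappa}=\sum_{h=0}^{b_k}{b_k\choose h}\bigl(-2^{1-\kappa}\bigr)^h=\bigl(1-2^{1-\kappa}\bigr)^{b_k}.
\]
(Equivalently, disjointness makes the random variables $\xi_{[K]}$, $K\in\mathcal{B}(k)$, independent, whence $\rho_k=\prod_{K\in\mathcal{B}(k)}\mathbb{E}[\xi_{[K]}]=\prod_{K\in\mathcal{B}(k)}(1-2q(K))=(1-2^{1-\kappa})^{b_k}$.) I would then set $c:=1-2^{1-\kappa}$ and note that $c=0$ if $\kappa=1$, while $c\in[\tfrac12,1)$ if $\kappa\ge2$.

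With this in hand the limits are immediate. In case~(1): if $\kappa=1$ then $c=0$, so $\rho_k=0$ whenever $b_k\ge1$, which holds for all large $k$ because the common cardinality $\kappa$ is assumed well defined (so $\mathcal{B}(k)\ne\emptyset$); if $\kappa\ge2$ and $b_n\to\infty$ then $\rho_n=c^{b_n}\to0$ since $0<c<1$. In case~(2): $b_n\to m$ together with $b_n\in\mathbb{Z}_{\ge0}$ forces $b_n=m$ for all large $n$, so $\rho_n=c^{m}=(1-2^{1-\kappa})^{m}$ eventually. In each regime $\rho_n$ converges to the value $\rho$ asserted in the statement, and the Ces\`aro mean of a convergent sequence tends to the same limit, so $\frac1n\sum_{k=1}^n\rho_k\to\rho$, which is~(A).

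I do not expect any real obstacle here: the entire argument rests on the identity $\rho_k=(1-2^{1-\kappa})^{|\mathcal{B}(k)|}$, and the disjointness and equal-cardinality hypotheses enter only through the reduction of $q(\left<H\right>)$ to a function of $|H|$, which is precisely what lets the binomial theorem collapse the alternating sum. The one point needing care is the degenerate sub-case $\kappa=1$ with $\mathcal{B}(k)=\emptyset$ (there $\eta_k=\xi_k$, so $\rho_k=1$): this is ruled out for all large $k$ by the standing assumption that $\kappa$ is well defined, and in any event altering finitely many terms does not change the Ces\`aro limit.
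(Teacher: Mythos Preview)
Your proof is correct and follows essentially the same route as the paper: both derive the closed form $\rho_k=(1-2^{1-\kappa})^{|\mathcal{B}(k)|}$ via the binomial theorem after observing that disjointness and equal cardinality force $q(\langle H\rangle)=2^{-\kappa|H|}$, then read off the limit. You add a bit more detail on the case analysis and the degenerate sub-case $\mathcal{B}(k)=\emptyset$, which the paper simply elides with ``the result immediately follows.''
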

\end{mdframed}
\begin{proof}
For $H\in\mathcal{P}(\mathcal{B}(n))$, $|\left<H\right>|=\sum_{K\in H}|K|=\kappa|H|$. It follows that for such an $H$, $q(\left<H\right>)=2^{-|\left<H\right>|}=2^{-\kappa|H|}$ and
\begin{eqnarray*}
\lefteqn{\sum_{H\in\mathcal{P}(\mathcal{B}(n))}(-2)^{|H|}q(\left<H\right>)}\\
& = & \sum_{H\in\mathcal{P}(\mathcal{B}(n))}(-2)^{|H|}2^{-\kappa|H|}\ =\ \sum_{H\in\mathcal{P}(\mathcal{B}(n))}\left(-2^{1-\kappa}\right)^{|H|}\\
& = & \sum_{h=0}^{|\mathcal{B}(n)|}\sum_{\substack{H\in\mathcal{P}(\mathcal{B}(n))\\[1pt]|H|=h}}\left(-2^{1-\kappa}\right)^h\ =\ \sum_{h=0}^{|\mathcal{B}(n)|}{|\mathcal{B}(n)|\choose h}\left(-2^{1-\kappa}\right)^h\
=\ \left(1-2^{1-\kappa}\right)^{|\mathcal{B}(n)|}
\end{eqnarray*}
The result immediately follows.
\end{proof}

Next we focus on the case $\kappa=1$. We call the resulting process the Extended Bootstrap Random Walk.
\begin{mdframed}
[style=MyFrame1]
\begin{corollary}
Suppose $\zeta_{k-1}=\prod_{j\in M_k}\xi_j$, where $M_k\in\mathbb{K}(k-1)$ (i.e. $\kappa=1$).
\begin{enumerate}
\item If $\displaystyle\lim_n\frac1n\sum_{k=1}^n1_{M_k=M_n}=0$, then $W^{(n)}$ converges weakly to a pair of independent Brownian motions.
\item Suppose further that $M_k\subseteq M_{k+1}$ and let $N(n)=\inf\{k:M_k=M_n\}$.
If $\lim_nN(n)/n=1$, then $W^{(n)}$ converges weakly to a pair of independent Brownian motions.
\end{enumerate}
\end{corollary}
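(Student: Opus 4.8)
The plan is to apply Theorem~\ref{mainth} with $\rho=0$: I will verify conditions (A) and (B), which forces $W^{(n)}$ to converge to a two-dimensional Brownian motion of zero correlation, i.e.\ to a pair of independent Brownian motions. First I would record the two relevant moments. Since $\xi_j^2=1$ and the $\xi_j$ are i.i.d.\ and mean-zero, $\mathbb{E}[\zeta_{k-1}]=\mathbb{E}\big[\prod_{j\in M_k}\xi_j\big]=1_{\{M_k=\emptyset\}}$, and since $\zeta_{k-1}\zeta_{\ell-1}=\prod_{j\in M_k\triangle M_\ell}\xi_j$, also $\mathbb{E}[\zeta_{k-1}\zeta_{\ell-1}]=1_{\{M_k=M_\ell\}}$. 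Writing $c_n(M)=\#\{k\le n:M_k=M\}$ and using $\sum_{k=1}^n c_n(M_k)=\sum_M c_n(M)^2$, conditions (A) and (B) with $\rho=0$ become
\[
\frac1n\,c_n(\emptyset)\to 0\qquad\text{and}\qquad\frac1{n^2}\sum_M c_n(M)^2\to 0 .
\]

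The heart of the matter is to upgrade the hypothesis of part~1, namely $\frac1n c_n(M_n)=\frac1n\sum_{k=1}^n 1_{\{M_k=M_n\}}\to 0$, to the \emph{uniform} statement $\frac1n\max_M c_n(M)\to 0$. I would argue by contradiction: if $c_{n_j}(M^{(j)})\ge\varepsilon n_j$ along a subsequence $n_j\uparrow\infty$ for some sets $M^{(j)}$, let $m_j\le n_j$ be the largest index with $M_{m_j}=M^{(j)}$; then no $k\in(m_j,n_j]$ contributes, so $c_{m_j}(M^{(j)})=c_{n_j}(M^{(j)})\ge\varepsilon n_j\ge\varepsilon m_j$, while $c_{m_j}(M^{(j)})\le m_j$ gives $m_j\ge\varepsilon n_j\to\infty$; hence $\frac1{m_j}c_{m_j}(M_{m_j})\ge\varepsilon$ along $m_j\uparrow\infty$, contradicting the hypothesis. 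Granting $\frac1n\max_M c_n(M)\to 0$, condition (A) follows from $c_n(\emptyset)\le\max_M c_n(M)=o(n)$, and (B) follows from $\sum_M c_n(M)^2\le\big(\max_M c_n(M)\big)\sum_M c_n(M)=n\max_M c_n(M)=o(n^2)$ (the elementary bound $\|f_n\|_2^2\le\|f_n\|_\infty\|f_n\|_1$ for the frequencies $f_n(M)=c_n(M)/n$). Theorem~\ref{mainth} then gives part~1. (In fact $\frac1n c_n(M_n)\to0$, $\frac1n\max_M c_n(M)\to0$ and (B) are all equivalent, so part~1 could be phrased as an ``if and only if''.)

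For part~2, I would show its hypotheses imply the hypothesis of part~1. Under $M_k\subseteq M_{k+1}$, the indices $k\le n$ with $M_k=M_n$ are exactly those in $\{N(n),N(n)+1,\dots,n\}$: if $N(n)\le k\le n$ then $M_n=M_{N(n)}\subseteq M_k\subseteq M_n$ forces $M_k=M_n$, while if $k<N(n)$ then minimality of $N(n)$ gives $M_k\neq M_n$. Hence $\frac1n\sum_{k=1}^n 1_{\{M_k=M_n\}}=\frac{n-N(n)+1}{n}\to 0$ whenever $N(n)/n\to 1$, and part~2 follows from part~1.

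The only genuinely non-routine point is the ``look back to the last occurrence'' argument in the second paragraph, which converts control of the frequency of the current value $M_n$ into control of the maximal frequency $\max_M c_n(M)$ and is exactly what powers (B); the remaining steps are the independence computation of the moments and elementary $\ell^1/\ell^2$ bookkeeping.
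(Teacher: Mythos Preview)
Your proof is correct. For part~2 you argue exactly as the paper does, reducing to part~1 via $\frac1n\sum_{k\le n}1_{\{M_k=M_n\}}=(n-N(n)+1)/n$. For part~1 the routes diverge: the paper bounds the double sum directly by
\[
\frac1{n^2}\sum_{k,\ell\le n}1_{\{M_k=M_\ell\}}
\;\le\;\frac2n\sum_{\ell\le n}\frac1\ell\sum_{k\le\ell}1_{\{M_k=M_\ell\}},
\]
and then applies a Ces\`aro argument to the sequence $b_\ell=\frac1\ell\sum_{k\le\ell}1_{\{M_k=M_\ell\}}\to0$; you instead upgrade the hypothesis to the uniform statement $\frac1n\max_M c_n(M)\to0$ by your ``last occurrence'' trick, and then use $\sum_M c_n(M)^2\le n\max_M c_n(M)$. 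The paper's argument is shorter, but yours extracts strictly more: it yields the equivalence of $\frac1n c_n(M_n)\to0$, $\frac1n\max_M c_n(M)\to0$, and condition~(B), hence the ``if and only if'' strengthening you note. Your treatment of~(A) is also cleaner than the paper's appeal to Proposition~\ref{prop:disjoint}, since that proposition tacitly assumes $M_k\neq\emptyset$ (its formula gives $0^{|\mathcal B(k)|}=1$ when $M_k=\emptyset$), whereas your uniform bound handles $c_n(\emptyset)/n\to0$ directly.
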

\end{mdframed}
\begin{proof}
\begin{enumerate}
\item Proposition \ref{prop:disjoint} guarantees that (A) holds true with $\rho=0$. We check (B):
$$\frac1{n^2}\sum_{\ell=1}^n\sum_{k=1}^n\mathbb{E}[\zeta_{k-1}\zeta_{\ell-1}] = -\frac1n+\frac2{n^2}\sum_{\ell=1}^n\sum_{k=1}^{\ell}1_{M_k=M_\ell} \le \frac2n\sum_{\ell=1}^n\frac1\ell\sum_{k=1}^{\ell}1_{M_k=M_\ell}.$$
Under the assumption that $\displaystyle\lim_n\frac1n\sum_{k=1}^n1_{M_k=M_n}=0$,
$$\lim_n\frac1{n^2}\sum_{\ell=1}^n\sum_{k=1}^n\mathbb{E}[\zeta_{k-1}\zeta_{\ell-1}] = 0 = \rho^2$$
and $W^{(n)}$ converges weakly to a pair of independent Brownian motions.
\item Since $M_k\subseteq M_{k+1}\subseteq\{1,\ldots,k\}$, $\displaystyle\frac1n\sum_{k=1}^n1_{M_k=M_n}=(n-N(n)+1)/n$. Therefore (B) holds true as soon as $\lim_nN(n)/n=1$.
\end{enumerate}
\end{proof}

\begin{example}
\begin{enumerate}
\item Fix $\lambda\in(0,1)$ and suppose $M_k=\{1,\ldots,\floor{\lambda k}\}$, for $k>1/\lambda$. Then, for $n>1/\lambda$, $N(n)>(\floor{\lambda n}-1)/\lambda$, and $\lim_nN(n)/n=1$.
\item Suppose $M_k=\{1,\ldots,m(k)\}$, where $m(k)=\floor{\ln k}$ for $k\ge3$.
Then, for any $\ell\in\{\floor{e^{n-1}}+1,\ldots,\floor{e^n}\}$, $m(\ell)=n-1$. It follows that for any $\ell\in\{\floor{e^{n-1}}+1,\ldots,\floor{e^n}\}$, $N(\ell)=\floor{e^{n-1}}+1$ and, for $n\ge2$,
$$\frac{N(\floor{e^n})}{\floor{e^n}}= \frac{\floor{e^{n-1}}+1}{\floor{e^n}}<\frac{e^{n-1}+1}{e^n-1}\le\frac{e+1}{e^2-1}<1.$$
However, $m(\floor{e^{n}}+1)=n$ and $N(\floor{e^{n}}+1)=\floor{e^n}+1$. It follows that $N(n)/n$ does not converge. Furthermore, for $\overline{n}=\floor{e^{n+1}}$ and $\underline{n}=\floor{e^{n}}+1$,
$$\frac2{\overline{n}^2}\sum_{\ell=2}^{\overline{n}}\sum_{k=1}^{\ell-1}1_{M_k=M_\ell} =  \frac2{\overline{n}^2}\sum_{\ell=2}^{\overline{n}}(\ell-N(\ell))\ge \frac2{\overline{n}^2}\sum_{\ell=\underline{n}}^{\overline{n}}(\ell-\underline{n}) = \frac{(\overline{n}-\underline{n}+1)(\overline{n}-\underline{n})}{\overline{n}^2}.$$
Since the last term approaches $1-2e^{-1}+e^{-2}>0$, (B) fails and $W^{(n)}$ does not converge to a two-dimensional Brownian motion.
\end{enumerate}
\end{example}
\begin{mdframed}
[style=MyFrame1]
\begin{corollary}
Suppose $\zeta_{k-1}=\prod_{j=1}^{\floor{R(k)}}\xi_j$, where $R$ is a strictly increasing, continuous, unbounded and regularly varying function with the property that $1\le R(z)<z$. If the regular variation index $\alpha$ is positive ($\alpha>0$), then $W^{(n)}$ converges weakly to a pair of independent Brownian motions.
\end{corollary}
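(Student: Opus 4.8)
The plan is to deduce the statement from part (2) of the preceding corollary, so the only real work is to check that $N(n)/n\to 1$, where $N(n)=\inf\{k:M_k=M_n\}$ and $M_k=\{1,\ldots,\lfloor R(k)\rfloor\}$. First I would record the structural facts: since $R$ is strictly increasing, $\lfloor R(k)\rfloor\le\lfloor R(k+1)\rfloor$, so $M_k\subseteq M_{k+1}$; and $\lfloor R(k)\rfloor\le R(k)<k$ shows $M_k\in\mathbb{K}(k-1)$, i.e.\ we are in the case $\kappa=1$, so by Proposition~\ref{prop:disjoint}(1) condition (A) holds with $\rho=0$. Thus, by part (2) of the preceding corollary, it suffices to prove $N(n)/n\to1$ in order to conclude that $W^{(n)}$ converges weakly to a pair of independent Brownian motions.

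Next I would sandwich $N(n)$ between two explicit quantities. Note $M_k=M_n$ iff $\lfloor R(k)\rfloor=\lfloor R(n)\rfloor$, so $N(n)\le n$ trivially. For the lower bound, if $\lfloor R(k)\rfloor=\lfloor R(n)\rfloor=:j$, then $R(k)\ge j>R(n)-1$, and since $R$ is continuous and strictly increasing this forces $k>R^{-1}(R(n)-1)$; hence $N(n)\ge R^{-1}(R(n)-1)$, which is well defined for all large $n$ because $R\ge1$ is continuous, strictly increasing and unbounded. Therefore
$$\frac{R^{-1}(R(n)-1)}{n}\ \le\ \frac{N(n)}{n}\ \le\ 1 ,$$
and it remains to show the left-hand side tends to $1$.

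Write $t_n=R^{-1}(R(n)-1)$, so $t_n<n$ and $R(n)-R(t_n)=1$ for all large $n$. I would argue by contradiction: if $t_n\le(1-\delta)n$ for some fixed $\delta\in(0,1)$ along a subsequence, then, $R$ being increasing,
$$1=R(n)-R(t_n)\ \ge\ R(n)-R((1-\delta)n)\ =\ R(n)\Bigl(1-\tfrac{R((1-\delta)n)}{R(n)}\Bigr).$$
By regular variation with index $\alpha$, $R((1-\delta)n)/R(n)\to(1-\delta)^\alpha$, and here the hypothesis $\alpha>0$ is exactly what gives $(1-\delta)^\alpha<1$; since $R(n)\to\infty$, the right-hand side tends to $+\infty$, a contradiction. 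Hence $t_n/n\to1$, and the squeeze above gives $N(n)/n\to1$, completing the proof.

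The computations are light; the only points needing care are the floor-function bookkeeping and checking that $R^{-1}(R(n)-1)$ is legitimate for large $n$. I would emphasise that this argument uses regular variation only in the direct form $R(\lambda z)/R(z)\to\lambda^\alpha$ and never needs regular variation of the inverse $R^{-1}$, which is what keeps it short; the condition $1\le R(z)<z$ is used to place us in the $\kappa=1$ regime of the preceding corollary and to guarantee $N(n)\le n$.
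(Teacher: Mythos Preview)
Your proof is correct and follows essentially the same route as the paper: both reduce to showing $N(n)/n\to1$ via the preceding corollary, bound $N(n)$ from below by (a quantity equivalent to) $R^{-1}(R(n)-1)$, and then use regular variation with $\alpha>0$ together with $R(n)\to\infty$ to force $R^{-1}(R(n)-1)/n\to1$. Your presentation is in fact slightly cleaner, bypassing the paper's intermediate sandwich $\lfloor R^{-1}(\lfloor R(n)\rfloor)\rfloor-1<N(n)\le\lfloor R^{-1}(\lfloor R(n)\rfloor)\rfloor+1$ and going straight to the squeeze $R^{-1}(R(n)-1)\le N(n)\le n$, and your contradiction argument is just a repackaging of the paper's direct $\varepsilon$-argument.
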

\end{mdframed}
\begin{proof}
Here $M_k=\{1,\ldots,\floor{R(k)}\}$.
Let $k=\floor{R^{-1}(\floor{R(n)})}+1$. Then $R^{-1}(\floor{R(n)})<k$,
$\floor{R(n)}<g(k)$ and $\floor{R(k)}=\floor{R(n)}$. Therefore, $N(n)\le\floor{R^{-1}(\floor{R(n)})}+1$.

Now, let $k=\floor{R^{-1}(\floor{R(n)})}-1$. Then $k+1\le R^{-1}(\floor{R(n)})$,
$R(k)<R(k+1)\le\floor{R(n)}$ and $\floor{R(k)}<\floor{R(n)}$. Therefore, $N(n)>\floor{R^{-1}(\floor{R(n)})}-1$.

In total,
$$\floor{R^{-1}(\floor{R(n)})}-1<N(n)\le\floor{R^{-1}(\floor{R(n)})}+1.$$
Since $\alpha>0$, $\forall~\varepsilon>0$, $\exists n^*$ such that $\forall n\ge n^*$,
$$\frac{R((1-\varepsilon)n)}{R(n)}+\frac1{R(n)}<1$$
or equivalently that
$$1-\varepsilon<\frac1nR^{-1}(R(n)-1)<\frac1nR^{-1}(\floor{R(n)})\leq1.$$
It immediately follows that $\lim_nN(n)/n=1$ and therefore that $W^{(n)}$ converges weakly to a pair of independent Brownian motions.
\end{proof}

\begin{remark}
Note that if the function $R$ is slowly varying ($\alpha=0$), then convergence of $N(n)/n$ is not guaranteed as can be seen from the case $R(z)=\ln z$.
\end{remark}

\subsection{The bounded case}
Here we look at the case where the number of sets in $\mathbb{K}(k-1)$ used in the recycling is fixed (equal to $m$).
\begin{mdframed}
[style=MyFrame1]
\begin{corollary}
Suppose $\zeta_{k-1}=\varepsilon_k\prod_{i=1}^m\xi_{[M^{(i)}_k]}$, where $M^{(i)}_k\in\mathbb{K}(k-1)$, $\varepsilon_k\in\{-1,1\}$, and, writing $M^{[K]}_k$ for $\bigcup_{i\in K}M^{(i)}_k$,
\begin{enumerate}
\item[(A$_b$)] for any $K\in\mathbb{K}(m)$, $\bds\gamma(K)=\lim_{n\to\infty}\frac1n\sum_{k=1}^n\varepsilon_kq\big(M^{[K]}_k\big)\eds$ exists;
\item[(B$_b$)] for any $K_1,K_2\in\mathbb{K}(m)$, $\bds\lim_{n\to\infty}\frac1n\sum_{k=1}^nq\big(M^{[K_1]}_k\cap M^{[K_2]}_\ell\big) = 1\eds$.
\end{enumerate}
Then $(U^{(n)},V^{(n)})$ converges weakly to a two-dimensional Brownian motion (possibly degenerate) with correlation $\sum_{K\in\mathbb{K}(m)}(-2)^{|K|}\gamma(K)$.
\end{corollary}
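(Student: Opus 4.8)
The plan is to verify hypotheses (A) and (B) of Theorem~\ref{mainth}, with correlation $\rho=\sum_{K\in\mathbb{K}(m)}(-2)^{|K|}\gamma(K)$, and then invoke that theorem. The computational engine throughout is the linearisation of products of $\xi_{[\cdot]}$'s furnished by Proposition~\ref{genphilinearprop}.

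First I would linearise $\zeta_{k-1}$. Applying Proposition~\ref{genphilinearprop} to the $m$ sets $M^{(1)}_k,\dots,M^{(m)}_k$ gives
$$\zeta_{k-1}=\varepsilon_k\prod_{i=1}^m\xi_{[M^{(i)}_k]}=\frac{\varepsilon_k}{2}(-1)^m-\frac{\varepsilon_k}{2}\sum_{K\in\mathbb{K}(m)}(-2)^{|K|}\xi_{[M^{[K]}_k]},$$
where the $K=\emptyset$ summand is $\xi_{[\emptyset]}=-1$. Taking expectations, and using both $\mathbb{E}[\xi_{[M]}]=1-2q(M)$ and the binomial identity $\sum_{K\in\mathbb{K}(m)}(-2)^{|K|}=(-1)^m$, the constant pieces cancel and
$$\rho_k:=\mathbb{E}[\zeta_{k-1}]=\varepsilon_k\sum_{K\in\mathbb{K}(m)}(-2)^{|K|}q\big(M^{[K]}_k\big).$$
Hence $\frac1n\sum_{k=1}^n\rho_k=\sum_{K\in\mathbb{K}(m)}(-2)^{|K|}\,\frac1n\sum_{k=1}^n\varepsilon_kq(M^{[K]}_k)\to\rho$ by (A$_b$), which is exactly condition (A).

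For (B) I would compare $\theta_{k,\ell}:=\mathbb{E}[\zeta_{k-1}\zeta_{\ell-1}]$ with $\rho_k\rho_\ell$. Multiplying the two linearisations of $\zeta_{k-1}$ and $\zeta_{\ell-1}$ and taking expectations yields an expression that collapses to precisely $\rho_k\rho_\ell$ once every mixed term $\mathbb{E}\big[\xi_{[M^{[K_1]}_k]}\xi_{[M^{[K_2]}_\ell]}\big]$ is replaced by the product of the two marginals; subtracting, one finds for all $k,\ell$ that
$$\theta_{k,\ell}-\rho_k\rho_\ell=\frac{\varepsilon_k\varepsilon_\ell}{4}\sum_{K_1,K_2\in\mathbb{K}(m)}(-2)^{|K_1|+|K_2|}\Big(\mathbb{E}\big[\xi_{[M^{[K_1]}_k]}\xi_{[M^{[K_2]}_\ell]}\big]-\mathbb{E}\big[\xi_{[M^{[K_1]}_k]}\big]\,\mathbb{E}\big[\xi_{[M^{[K_2]}_\ell]}\big]\Big).$$
The elementary identity $\mathbb{E}[\xi_{[S]}\xi_{[T]}]-\mathbb{E}[\xi_{[S]}]\mathbb{E}[\xi_{[T]}]=4q(S)q(T)\big(2^{|S\cap T|}-1\big)$, combined with $q(S\cap T)\ge\max\{q(S),q(T)\}$, shows that each such difference is nonnegative and at most $4\big(1-q(S\cap T)\big)$, so that
$$\big|\theta_{k,\ell}-\rho_k\rho_\ell\big|\le\sum_{K_1,K_2\in\mathbb{K}(m)}2^{|K_1|+|K_2|}\Big(1-q\big(M^{[K_1]}_k\cap M^{[K_2]}_\ell\big)\Big).$$
Averaging over $1\le k,\ell\le n$, applying (B$_b$) to each of the finitely many pairs $(K_1,K_2)$, and noting that the $n$ diagonal terms $k=\ell$ are each bounded by a constant and hence contribute only $O(1/n)$, I get $\frac1{n^2}\sum_{k,\ell}\theta_{k,\ell}-\big(\frac1n\sum_k\rho_k\big)^2\to0$. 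Since $\big(\frac1n\sum_k\rho_k\big)^2\to\rho^2$ by the previous step, this is condition (B), and Theorem~\ref{mainth} yields the claim.

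I expect no conceptual obstacle: the work is the two applications of Proposition~\ref{genphilinearprop} together with the covariance estimate $|\mathbb{E}[\xi_{[S]}\xi_{[T]}]-\mathbb{E}[\xi_{[S]}]\mathbb{E}[\xi_{[T]}]|\le4(1-q(S\cap T))$, which is precisely what lets hypothesis (B$_b$) be used directly. The two points requiring care are the bookkeeping of the $K=\emptyset$ term — the source of the constants that must cancel when computing $\rho_k$ — and reading (B$_b$) as a double Cesàro average over $(k,\ell)$, without which the final estimate does not close.
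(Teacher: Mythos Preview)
Your approach is correct and essentially the same as the paper's: both verify (A) and (B) of Theorem~\ref{mainth} by computing $\rho_k=\varepsilon_k\sum_K(-2)^{|K|}q(M^{[K]}_k)$ and then bounding $|\theta_{k,\ell}-\rho_k\rho_\ell|$ by a constant times $\sum_{K_1,K_2}\big(1-q(M^{[K_1]}_k\cap M^{[K_2]}_\ell)\big)$ via the identity $q(S\cup T)-q(S)q(T)=q(S\cup T)(1-q(S\cap T))$. The only substantive difference is packaging: the paper works inside the $\mathcal{B}(k)$ formalism (case-splitting on $\varepsilon_k=\pm1$) rather than invoking Proposition~\ref{genphilinearprop} directly, and it makes the passage from the single-index hypothesis (B$_b$) to the required double average explicit via Stolz--Ces\`aro, a step you correctly flag as delicate but leave unwritten.
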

\end{mdframed}
\begin{proof}
Note that $\varepsilon_k=\xi_{[\emptyset]}^{\beta_{k,\emptyset}}$ and $\varepsilon_k=-1$ if and only if $\emptyset\in\mathcal{B}(k)$. Therefore,
$\mathcal{B}(k) = \{M^{(1)}_k,\ldots,M^{(m)}_k\}$ or $\{\emptyset,M^{(1)}_k,\ldots,M^{(m)}_k\}$  depending on whether $\varepsilon_k=1$ or $-1$. It follows that selecting $H\in\mathcal{P}(\mathcal{B}(k))$ reduces to choosing $K\in\mathbb{K}(m)$. If $\varepsilon_k=1$, then
$$\sum_{H\in\mathcal{P}(\mathcal{B}(k))}(-2)^{|H|}q(\left<H\right>) = \sum_{K\in\mathbb{K}(m)}(-2)^{|K|}q\big(M^{[K]}_k\big).$$
If $\varepsilon_k=-1$, then
\begin{eqnarray*}
\lefteqn{\sum_{H\in\mathcal{P}(\mathcal{B}(k))}(-2)^{|H|}q(\left<H\right>)}\\
& = & \sum_{K\in\mathbb{K}(m)}(-2)^{|K|}q\big(M^{[K]}_k\big)+\sum_{K\in\mathbb{K}(m)}(-2)^{|K|+1}q\big(M^{[K]}_k\big)\\
& = & -\sum_{K\in\mathbb{K}(m)}(-2)^{|K|}q\big(M^{[K]}_k\big),
\end{eqnarray*}
and (A$_b$) clearly implies (A).

To show that (B$_b$) implies (B), we write
\begin{eqnarray*}
\lefteqn{\frac1{n^2}\sum_{\ell=1}^n\sum_{k=1}^n\sum_{\substack{H\in\mathcal{P}(\mathcal{B}(k))\\[1pt]J\in\mathcal{P}(\mathcal{B}(\ell))}}(-2)^{|H|+|J|}\left(q(\left<H\right>\cup\left<J\right>)-q(\left<H\right>)q(\left<J\right>)\right)}\\
& = & \frac1{n^2}\sum_{\ell=1}^n\sum_{k=1}^n\sum_{\substack{H\in\mathcal{P}(\mathcal{B}(k))\\[1pt]J\in\mathcal{P}(\mathcal{B}(\ell))}}(-2)^{|H|+|J|}q(\left<H\right>\cup\left<J\right>)\left(1-q(\left<H\right>\cap\left<J\right>)\right),
\end{eqnarray*}
where we use the fact that $q(M_1\cup M_2)q(M_1\cap M_2)=q(M_1)q(M_2)$. We distinguish three cases: $\varepsilon_k=\varepsilon_\ell=1$, $\varepsilon_k=\varepsilon_\ell=-1$ and $\varepsilon_k\varepsilon_\ell=-1$.
However, $\left<H\right>$ and $\left<J\right>$ are not affected by the inclusion of the empty set, and the only change is the increase by one of $|H|$ and $|J|$.
It follows that
\begin{eqnarray*}
\lefteqn{\sum_{\substack{H\in\mathcal{P}(\mathcal{B}(k))\\[1pt]J\in\mathcal{P}(\mathcal{B}(\ell))}}(-2)^{|H|+|J|}q(\left<H\right>\cup\left<J\right>)\left(1-q(\left<H\right>\cap\left<J\right>)\right)}\\
& = & \varepsilon_k\varepsilon_\ell\sum_{K_1,K_2\in\mathbb{K}(m)}(-2)^{|K_1|+|K_2|}q\big(M^{[K_1]}_k\cup M^{[K_2]}_\ell\big)\Big(1-q\big(M^{[K_1]}_k\cap M^{[K_2]}_\ell\big)\Big)
\end{eqnarray*}
and that
\begin{eqnarray*}
\lefteqn{\Big|\frac1{n^2}\sum_{\ell=1}^n\sum_{k=1}^n\sum_{\substack{H\in\mathcal{P}(\mathcal{B}(k))\\[1pt]J\in\mathcal{P}(\mathcal{B}(\ell))}}(-2)^{|H|+|J|}\left(q(\left<H\right>\cup\left<J\right>)-q(\left<H\right>)q(\left<J\right>)\right)\Big|}\\
& \le & 2^{2m}\sum_{K_1,K_2\in\mathbb{K}(m)}\frac1{n^2}\sum_{\ell=1}^n\sum_{k=1}^n\Big(1-q\big(M^{[K_1]}_k\cap M^{[K_2]}_\ell\big)\Big)
\end{eqnarray*}
If we let $\bds a_n=\sum_{\ell=2}^n\sum_{k=1}^{\ell-1}\Big(1-q\big(M^{[K_1]}_k\cap M^{[K_2]}_\ell\big)\Big)\eds$, then
$$\frac{a_{n+1}-a_n}{(n+1)^2-n^2} = \frac1{2n+1}\sum_{k=1}^n\Big(1-q\big(M^{[K_1]}_k\cap M^{[K_2]}_\ell\big)\Big)\longrightarrow0,$$
and the result follows immediately by application of Stolz-Ces\`aro Theorem.
\end{proof}

\begin{example}
Suppose $\eta_k=\xi_{[\emptyset]}^{\beta_{k,\emptyset}}\xi_k=\varepsilon_k\xi_k$ and let $E_n=\{k\le n:\varepsilon_k=-1\}=\{k\le n:\beta_{k,\emptyset}=1\}$. If $p=\lim_n|E_n|/n$ exists, then $W^{(n)}$ converges weakly to a two-dimensional Brownian motion with correlation $\rho=1-2p$. In particular, given any $\rho\in[-1,1]$, one can construct a GBRW that, when suitably normalized, converges to a two-dimensional Brownian motion with correlation $\rho$.
\end{example}
\begin{mdframed}
[style=MyFrame1]
\begin{corollary}
Suppose $\zeta_{k-1}=\xi_{[M_k]}$, where $M_k\in\mathbb{K}(k-1)$.
\begin{enumerate}
\item If $\lim_n|M_n|=+\infty$, then $(U^{(n)},V^{(n)})$ converges to a degenerate Brownian motion.
\item If $\lim_n|M_n|$ exists and is finite, and $\limsup_nM_n=\emptyset$ (no index appears infinitely many times) then $(U^{(n)},V^{(n)})$ converges to a (proper) Brownian motion with correlation $\rho=1-2^{1-m}$, where $m=\lim_n|M_n|$.
\end{enumerate}
\end{corollary}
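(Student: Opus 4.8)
The plan is to deduce both parts from Theorem~\ref{mainth} by checking conditions (A) and (B) for the particular recycling $\zeta_{k-1}=\xi_{[M_k]}$ (which corresponds to $\mathcal B(k)=\{M_k\}$, or $\{\emptyset\}$ when $M_k=\emptyset$). The first moment is immediate: $\rho_k:=\mathbb E[\zeta_{k-1}]=\mathbb E[\xi_{[M_k]}]=1-2q(M_k)=1-2^{1-|M_k|}$. For the second moments one finds, either by unwinding the four-term double sum in the corollary following Theorem~\ref{genphith} or by applying Proposition~\ref{genphilinearprop} with $m=2$, $M_1=M_k$, $M_2=M_\ell$ (which linearises $\xi_{[M_k]}\xi_{[M_\ell]}=1+\xi_{[M_k]}+\xi_{[M_\ell]}-2\xi_{[M_k\cup M_\ell]}$), that $\theta_{k,\ell}:=\mathbb E[\zeta_{k-1}\zeta_{\ell-1}]=1-2q(M_k)-2q(M_\ell)+4q(M_k\cup M_\ell)$. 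Using the identity $q(A)q(B)=q(A\cup B)q(A\cap B)$ (immediate from $|A|+|B|=|A\cup B|+|A\cap B|$), this rearranges into the key relation $\theta_{k,\ell}-\rho_k\rho_\ell=4\,q(M_k\cup M_\ell)\bigl(1-q(M_k\cap M_\ell)\bigr)\ge0$.

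Condition (A) is then a Ces\`aro argument applied to $\rho_k=1-2^{1-|M_k|}$. In case (1), $|M_k|\to\infty$ forces $2^{1-|M_k|}\to0$, so $\rho=\lim_n\frac1n\sum_{k\le n}\rho_k=1$; in case (2), $2^{1-|M_k|}\to 2^{1-m}$, so $\rho=1-2^{1-m}$. In case (2) the hypotheses rule out $m=0$ under the reading that the limit is ``proper'' (for $m=0$ one would have $\zeta_{k-1}=\xi_{[\emptyset]}=-1$ eventually, hence $\rho=-1$), so $\rho\in[0,1)$; a two-dimensional Brownian motion with $|\rho|<1$ is proper while one with $\rho=1$ is degenerate, which matches the two stated conclusions once (B) is verified.

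For (B) I would write $\frac1{n^2}\sum_{k,\ell\le n}\theta_{k,\ell}=\bigl(\frac1n\sum_{k\le n}\rho_k\bigr)^2+R_n$ with $R_n:=\frac4{n^2}\sum_{k,\ell\le n}q(M_k\cup M_\ell)\bigl(1-q(M_k\cap M_\ell)\bigr)$; the first term tends to $\rho^2$ by (A), so (B) is equivalent to $R_n\to0$. Case (1) is quick: bounding $1-q(M_k\cap M_\ell)\le1$ and $q(M_k\cup M_\ell)\le 2^{-\max(|M_k|,|M_\ell|)}\le 2^{-|M_\ell|}$ gives $R_n\le\frac4n\sum_{\ell\le n}2^{-|M_\ell|}\to0$ by Ces\`aro, and Theorem~\ref{mainth} then yields convergence to a degenerate Brownian motion, finishing part (1). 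For case (2) I would instead use $0\le 1-q(M_k\cap M_\ell)\le\1_{M_k\cap M_\ell\ne\emptyset}\le|M_k\cap M_\ell|$ and $q(M_k\cup M_\ell)\le1$ to reduce $R_n\to0$ to the combinatorial statement $\frac1{n^2}\sum_{k,\ell\le n}|M_k\cap M_\ell|=\frac1{n^2}\sum_j c_j(n)^2\to0$, where $c_j(n):=\#\{k\le n:j\in M_k\}$; note $\sum_j c_j(n)=\sum_{k\le n}|M_k|=O(n)$ since $|M_k|\to m$.

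The main obstacle is precisely this combinatorial estimate in case (2); I expect it to be the only non-routine step. The hypothesis $\limsup_nM_n=\emptyset$ guarantees that each $c_j:=\#\{k:j\in M_k\}$ is finite, and the natural line of attack is to fix a threshold $T$, split $\sum_j c_j(n)^2$ over $\{j:c_j\le T\}$ and $\{j:c_j>T\}$, estimate the first block by $T\sum_j c_j(n)=O(Tn)=o(n^2)$ for fixed $T$, and then bound the high-multiplicity block and let $T\to\infty$ slowly. Making the high-multiplicity block $o(n^2)$ uniformly in $n$ is the delicate part, and is where the full strength of the case~(2) hypotheses (not $\limsup_nM_n=\emptyset$ in isolation, but together with $\sup_k|M_k|<\infty$, for instance through a bound on $\max_j c_j(n)$) has to be exploited. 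Once $R_n\to0$ is established, Theorem~\ref{mainth} gives weak convergence of $(U^{(n)},V^{(n)})$ to a two-dimensional Brownian motion with correlation $\rho=1-2^{1-m}$, proper since $m\ge1$, completing part (2).
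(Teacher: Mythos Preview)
Your reduction to Theorem~\ref{mainth}, the formulas $\rho_k=1-2^{1-|M_k|}$ and $\theta_{k,\ell}=1-2q(M_k)-2q(M_\ell)+4q(M_k\cup M_\ell)$, and the key identity $\theta_{k,\ell}-\rho_k\rho_\ell=4\,q(M_k\cup M_\ell)\bigl(1-q(M_k\cap M_\ell)\bigr)$ all match the paper exactly, and your treatment of case~(1) is correct and equivalent to the paper's. In case~(2) the paper also bounds $R_n\le\frac{4}{n^2}\sum_{k,\ell}(1-q(M_k\cap M_\ell))$, but rather than passing to $\frac1{n^2}\sum_j c_j(n)^2$ it applies Stolz--Ces\`aro to the off-diagonal part, reducing the problem to $\frac1n\sum_{k\le n}(1-q(M_k\cap M_{n+1}))\to0$, and then invokes the appendix Lemma~\ref{onesetlemma}, which asserts $\lim_n\frac1n\sum_{k\le n}|M_k\cap M_{n+1}|=0$ whenever $|M_k|$ is constant and $\limsup_n M_n=\emptyset$.

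The ``delicate part'' you flag is a genuine obstruction, not a technicality, and your threshold argument cannot be completed: the claim $\frac1{n^2}\sum_j c_j(n)^2\to0$ is \emph{false} under the stated hypotheses. Take $m=1$ and set $M_k=\{r\}$ for $k\in(n_{r-1},n_r]$ with $n_0=1$ and $n_r-n_{r-1}=2^r$ (so $n_R=2^{R+1}-1$). Then $|M_k|=1$ for $k\ge2$ and each index $r$ lies in only $2^r<\infty$ of the $M_k$, so $\limsup_n M_n=\emptyset$; yet $c_r(n_R)=2^r$ gives $\sum_j c_j(n_R)^2=\sum_{r\le R}4^r\sim\frac13\,n_R^2$. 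Here $\theta_{k,\ell}=\1_{M_k=M_\ell}$, so $\frac1{n_R^2}\sum_{k,\ell\le n_R}\theta_{k,\ell}\to\frac13\ne0=\rho^2$ and condition~(B) of Theorem~\ref{mainth} actually fails; equivalently $[U^{(n_R)},V^{(n_R)}]_1=\frac{1}{n_R}\bigl(-1+\sum_{r\le R}2^r\xi_r\bigr)$ has variance tending to $\frac13$, so it does not converge in probability to a constant. The same example shows that $\frac1n\sum_{k\le n}|M_k\cap M_{n+1}|$ oscillates between $0$ (at $n=n_{r-1}$) and $\frac12$ (at $n=n_r-1$), so the conclusion of Lemma~\ref{onesetlemma} also fails; its proof tacitly assumes $\liminf_n\frac1n|A_n|>0$ when it says ``for $n$ large enough $|A_n|>n\varepsilon$'', whereas the correct contrapositive only gives this along a subsequence. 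In short, your instinct that this is where the real difficulty lies is exactly right, but neither your route nor the paper's can succeed without an additional hypothesis such as a uniform bound $\sup_j c_j<\infty$ on the multiplicities.
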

\end{mdframed}
\begin{proof}
\begin{enumerate}
\item We check that (A) of Theorem \ref{mainth} holds true with $\rho=1$. Indeed, $\mathcal{B}(k)=\{M_k\}$ and
$$\lim_n\frac1n\sum_{k=1}^n\sum_{H\in\mathcal{P}(\mathcal{B}(k))}(-2)^{|H|}q(\left<H\right>) = \lim_n\frac1n\sum_{k=1}^n(1-2q(M_k)) = 1.$$
Similarly,
\begin{eqnarray*}
\lefteqn{\frac1{n^2}\sum_{\ell=1}^n\sum_{k=1}^n\sum_{\substack{H\in\mathcal{P}(\mathcal{B}(k))\\[1pt]J\in\mathcal{P}(\mathcal{B}(\ell))}}(-2)^{|H|+|J|}q(\left<H\right>\cup\left<J\right>)}\\
& = & \frac1{n^2}\sum_{\ell=1}^n\sum_{k=1}^n\left(1-2q(M_k)-2q(M_\ell)+4q(M_k\cup M_\ell)\right)\\
& = & 1 - \frac4n\sum_{k=1}^nq(M_k) + \frac4{n^2}\sum_{\ell=1}^n\sum_{k=1}^nq(M_k\cup M_\ell),
\end{eqnarray*}
and (B) follows. Applying Theorem \ref{mainth} completes the proof.
\item In this case
$\lim_n\sum_{H\in\mathcal{P}(\mathcal{B}(n))}(-2)^{|H|}q(\left<H\right>) = \lim_n(1-2^{1-|M_n|})$ exists and is strictly smaller than 1.
Next we check that (B) holds. Indeed,
\begin{eqnarray*}
\lefteqn{\left|\frac1{n^2}\sum_{k,\ell=1}^n\sum_{\substack{K\in\mathcal{P}(\mathcal{B}(k))\\[1pt]L\in\mathcal{P}(\mathcal{B}(\ell))}}(-2)^{|K|+|L|}q(\left<K\right>\cup\left<L\right>)-
\left(\frac1n\sum_{k=1}^n\sum_{H\in\mathcal{P}(\mathcal{B}(k))}(-2)^{|H|}q(\left<H\right>)\right)^2\right|}\\
& = & \frac1{n^2}\sum_{k,\ell=1}^n\big(1-2q(M_k)-2q(M_\ell)+4q(M_k\cup M_\ell)-(1-2q(M_k)(1-2q(M_\ell)\big)\\
& = & \frac4{n^2}\sum_{k,\ell=1}^n\big(q(M_k\cup M_\ell)-q(M_k)q(M_\ell)\big)\\
& = & \frac4{n^2}\sum_{k,\ell=1}^nq(M_k\cup M_\ell)(1-q(M_k\cap M_\ell))\\
& \leq & \frac4{n^2}\sum_{k,\ell=1}^n(1-q(M_k\cap M_\ell))\\
& = & \frac4{n^2}\sum_{k=1}^n(1-q(M_k))+\frac8{n^2}\sum_{\ell=2}^n\sum_{k=1}^{\ell-1}(1-q(M_k\cap M_\ell))
\end{eqnarray*}
The first term clearly converges to 0. To obtain the limit of the second term, we let $a_n=\sum_{\ell=2}^n\sum_{k=1}^{\ell-1}(1-q(M_k\cap M_\ell))$ and observe that
$$\frac{a_{n+1}-a_n}{(n+1)^2-n^2} = \frac1{2n+1}\sum_{k=1}^n(1-q(M_k\cap M_{n+1})).$$
Since $|M_n|$ becomes constant (for $n$ large enough), applying Lemma \ref{onesetlemma}, we see that
$$\lim_n\frac1n\sum_{k=1}^n(1-q(M_k\cap M_{n+1})) \leq  \lim_n\frac1n\sum_{k=1}^n|M_k\cap M_{n+1}| = 0.$$

The result immediately follows.
\end{enumerate}
\end{proof}

\begin{example}
If $\eta_k=\max(\xi_{k-m},\ldots,\xi_{k-1})\xi_k$, then $|M_k|=m$, $\limsup_nM_n=\emptyset$ and $\rho=1-2^{1-m}$.
\end{example}

\section{The non-Gaussian case}\label{Section:NonGauss}

As stated in the introduction, it is not difficult to imagine situations where $(U^{(n)},V^{(n)})$ converges to a non-Gaussian limit or even fails to converge. In this section we look at a particular setting that leads to non-degenerate non-Gaussian limits.

First, we make the observation that $(U^{(n)})_{n\ge0}$ and $(V^{(n)})_{n\ge0}$ are tight and therefore so is $((U^{(n)},V^{(n)}))_{n\ge0}$. Indeed, $(U^{(n)})_{n\ge0}$ and $(V^{(n)})_{n\ge0}$ are weakly convergent sequences, and as such we know that for any $\varepsilon>0$ there exist
compact sets $C_U$ and $C_V$ such that $\sup_n\mathbb{P}(U^{(n)}\not\in C_U)<\varepsilon/2$ and $\sup_n\mathbb{P}(V^{(n)}\not\in C_V)<\varepsilon/2$, from which we get
\begin{eqnarray*}
\mathbb{P}((U^{(n)},V^{(n)})\not\in C_U\times C_V) & = & \mathbb{P}(U^{(n)}\not\in C_U\mbox{ or }V^{(n)}\not\in C_V)\\
& \leq & \mathbb{P}(U^{(n)}\not\in C_U)+\mathbb{P}(V^{(n)}\not\in C_V)\ <\ \varepsilon.
\end{eqnarray*}
Since the product of two compact sets is a compact set in the product space, we deduce that $((U^{(n)},V^{(n)}))_{n\ge0}$ is tight, and by Prohorov's Theorem, it contains a weakly convergent subsequence.

The question we ask is what settings lead to convergence (of the whole sequence). Amongst those is the case of symmetric functions. A function is said to be symmetric if it is unchanged by any permutation of the coordinates. When dealing with functions on $\{-1,1\}^n$, these can be described in a succinct manner.
\begin{mdframed}
[style=MyFrame1]
\begin{lemma}
A function $\psi$ on $\{-1,1\}^n$ is symmetric if and only if there exists a function $f$ such that
$$\psi(u_1,\ldots,u_n) = f(s_n),$$
where $s_n=\sum_{k=1}^nu_k$.
\end{lemma}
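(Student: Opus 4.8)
The plan is to prove the two implications separately, the forward one (symmetry $\Rightarrow$ dependence on $s_n$ only) being the substantive direction, though it is still elementary.

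First I would dispose of the ``if'' direction. Suppose $\psi(u_1,\dots,u_n)=f(s_n)$ with $s_n=\sum_{k=1}^n u_k$. For any permutation $\sigma$ of $\{1,\dots,n\}$ we have $\sum_{k=1}^n u_{\sigma(k)}=\sum_{k=1}^n u_k=s_n$, so $\psi(u_{\sigma(1)},\dots,u_{\sigma(n)})=f(s_n)=\psi(u_1,\dots,u_n)$; hence $\psi$ is symmetric. No choice of $f$ is even used here beyond the fact that $s_n$ is itself a symmetric function.

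For the ``only if'' direction, the key observation is that on the cube $\{-1,1\}^n$ the level sets of the map $u\mapsto s_n(u)$ coincide exactly with the orbits of the coordinate-permutation action of the group of permutations of $\{1,\dots,n\}$. Indeed, for $u\in\{-1,1\}^n$ the number of coordinates equal to $+1$ is $(n+s_n(u))/2$, so $s_n(u)=s_n(u')$ if and only if $u$ and $u'$ have the same number of $+1$ coordinates; and two $\{-1,1\}$-vectors with the same number of $+1$ coordinates are obtained from one another by permuting coordinates (simply move the positions carrying $+1$), while conversely permuting coordinates preserves the number of $+1$ coordinates. Thus $s_n$ is constant on permutation orbits and separates distinct orbits. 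Granting this, the conclusion is immediate: since $\psi$ is symmetric it is constant on each permutation orbit, hence constant on each level set of $s_n$, so one may define $f$ on the range $\{-n,-n+2,\dots,n-2,n\}$ of $s_n$ by choosing, for each attained value $m$, any $u$ with $s_n(u)=m$ and setting $f(m)=\psi(u)$ (well-defined by the preceding sentence), and extend $f$ arbitrarily to the rest of $\mathbb{R}$. Then $\psi(u)=f(s_n(u))$ for every $u\in\{-1,1\}^n$.

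There is essentially no hard step; the only point that needs care — and it is a one-line check — is the chain of equivalences ``same coordinate sum $\iff$ same number of $+1$ entries $\iff$ related by a coordinate permutation'' on the binary cube, which is precisely what forces the level sets of $s_n$ to be the symmetry orbits and hence makes the factorisation through $s_n$ possible.
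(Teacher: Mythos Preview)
Your proof is correct and is essentially the same as the paper's: both reduce to the observation that on $\{-1,1\}^n$ the permutation orbits are exactly the level sets of the number of $+1$'s (equivalently of $-1$'s, as the paper phrases it via $\nu_n$), and hence of $s_n=n-2\nu_n$. The only difference is cosmetic --- you spell out the trivial ``if'' direction and the construction of $f$ explicitly, whereas the paper leaves these implicit.
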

\end{mdframed}
\begin{proof}
Let $\nu_n$ be the number of components equal to $-1$ in $(u_1,\ldots,u_n)$: $\nu_n=|\{k:u_k=-1\}|$. Since $\psi$ is symmetric,
$$\psi(u_1,\ldots,u_n) = \psi(\underbrace{-1,\ldots,-1}_{\nu_n},\underbrace{+1,\ldots,+1}_{n-\nu_n}).$$
Therefore $\psi(u)$ depends on $(u_1,\ldots,u_n)$ only through $\nu_n$. The result follows from the observation that $s_n = n-2\nu_n$.
\end{proof}
For example, in the special case of the product (studied in \cite{CHS2016} and \cite{CHL2019}), the $\psi_n$'s are symmetric and $\prod_{k=1}^nu_k = (-1)^{(n-s_n)/2}$.

We wish to investigate the asymptotics of $(U^{(n)},V^{(n)})$ in the case of symmetric $\psi_n$'s. More specifically, we assume that
$$\psi_{k-1}(u_1,\ldots,u_{k-1})=f((u_1+\ldots+u_{k-1})/\sqrt{k}),$$
for some right-continuous function $f$ taking values in $\{-1,1\}$. In other words, we assume that
$$\eta_k = \phi_k(\xi_1,\ldots,\xi_k) = \psi_{k-1}(\xi_1,\ldots,\xi_{k-1})\xi_k = f(X_{k-1}/\sqrt{k})\xi_k.$$
Since
$$\Delta U^{(n)}_t=U^{(n)}_t-U^{(n)}_{t-}=\left\{
\begin{array}{ll}
\xi_k/\sqrt{n} & t=k/n,\ k=1,\ldots,n\\
0 & \text{otherwise}
\end{array}
\right.$$
the above assumptions lead to
$$V^{(n)}_t = \frac1{\sqrt{n}}\sum_{k=1}^{\floor{nt}}f(X_{k-1}/\sqrt{k})\xi_k = \int_{(0,t]}f(U^{(n)}_{s-}/\sqrt{s})dU^{(n)}_s.$$
We shall further assume that the set $\Delta_f=\{z\in\mathbb{R}:\Delta f(z)=f(z)-f(z-)\neq0\}$ is non-empty and has positive minimum gap; that is $\delta_0=\inf_{a\in\Delta_f}\inf_{b\in\Delta_f\setminus\{a\}}|b-a|>0$.
Note that if $a\in\Delta_f$, $\Delta f(a)=2f(a)$ and that if $a,b\in\Delta_f$ and $(a,b)\cap\Delta_f=\emptyset$, then $f(b)=-f(a)$.

For $a\in\Delta_f$ and $0<\delta<\delta_0$, let
$$f^{(a)}_\delta(z) = f(a)\left(\frac1\delta(z-a)1_{|z-a|<\delta}+\sgn(z-a)1_{|z-a|\ge\delta}\right).$$
Next, we introduce a family, indexed by $\delta\in(0,\delta_0)$, of continuous piecewise linear functions that approximate $f$:
$$f_\delta(z) = \left\{
\begin{array}{ll}
f^{(a)}_\delta(z) & z\in(a-\delta,a+\delta),\ a\in\Delta_f\\
f(z) & z\not\in\bigcup_{a\in\Delta_f}(a-\delta,a+\delta)
\end{array}
\right.$$
\begin{mdframed}
[style=MyFrame1]
\begin{lemma}\label{fdelta}
$\forall \delta\in(0,\delta_0)$, $\forall z\in\mathbb{R}$, $(f_\delta(z)-f(z))^2<\sum_{a\in\Delta_f}1_{(a-\delta,a+\delta)}(z)$. In particular,
$\forall z\not\in\Delta_f$, $\lim_{\delta\to0}f_\delta(z)=f(z)$. Furthermore, $|f_\delta-f|$ is a continuous function.
\end{lemma}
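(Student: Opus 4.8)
The plan is to derive an explicit pointwise formula for the difference $f_\delta-f$ and read off all three assertions from it. Off the set $\bigcup_{a\in\Delta_f}(a-\delta,a+\delta)$ the definition gives $f_\delta=f$, so both sides of the claimed inequality vanish there and nothing has to be checked; all the work is on a single interval $I_a:=(a-\delta,a+\delta)$ with $a\in\Delta_f$. Note that since $\delta<\delta_0$ any fixed $z$ belongs to at most two — in the natural regime, exactly one — of these intervals, which is also what makes $f_\delta$ well defined.

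First I would pin down $f$ on $I_a$. Since consecutive elements of $\Delta_f$ lie at distance $\ge\delta_0>\delta$, the only discontinuity of $f$ inside $I_a$ is at $a$; as $f$ is right-continuous and $\{-1,1\}$-valued this forces $f$ to be constant on each of $(a-\delta,a)$ and $[a,a+\delta)$, and the recorded identity $\Delta f(a)=2f(a)$ (i.e. $f(a-)=-f(a)$) fixes these constants: $f\equiv -f(a)$ on $(a-\delta,a)$ and $f\equiv f(a)$ on $[a,a+\delta)$. On $I_a$ we have $|z-a|<\delta$, so $f_\delta(z)=f^{(a)}_\delta(z)=f(a)(z-a)/\delta$. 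A short computation in the two sub-cases $z<a$ and $z\ge a$ (keeping track of which constant $f$ takes and using $|f(a)|=1$) then gives the clean identity
$$|f_\delta(z)-f(z)|=\Big(1-\tfrac{|z-a|}{\delta}\Big)1_{I_a}(z)\qquad\text{on }I_a,$$
so that globally $|f_\delta(z)-f(z)|=\sum_{a\in\Delta_f}\big(1-|z-a|/\delta\big)^{+}$, a locally finite sum of continuous ``tent'' functions.

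From this identity the three conclusions are immediate. For the inequality, each summand satisfies $\big(1-|z-a|/\delta\big)^{+}\le 1_{I_a}(z)$ and the whole sum is $\le1$ (if $z$ lies in two intervals $I_a,I_b$ then $|z-a|+|z-b|\ge|a-b|\ge\delta_0$, which already forces the two-term sum below $1$), so
$$(f_\delta(z)-f(z))^2\le|f_\delta(z)-f(z)|\le\sum_{a\in\Delta_f}1_{(a-\delta,a+\delta)}(z),$$
with strict inequality whenever $z$ lies in some $I_a$ with $z\neq a$. For the limit, if $z\notin\Delta_f$ then $\mathrm{dist}(z,\Delta_f)>0$ because $\Delta_f$ is $\delta_0$-separated, hence closed; then for every $\delta<\mathrm{dist}(z,\Delta_f)$ we have $z\notin\bigcup_aI_a$, so $f_\delta(z)=f(z)$ and $\lim_{\delta\to0}f_\delta(z)=f(z)$. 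Finally, $z\mapsto|f_\delta(z)-f(z)|=\sum_{a\in\Delta_f}(1-|z-a|/\delta)^{+}$ is continuous, being a locally finite sum of continuous functions. The one place that needs genuine care — and thus the main obstacle — is the middle step: correctly reading off the one-sided values of $f$ at the jump $a$ from the hypotheses and matching them with the linear piece $f^{(a)}_\delta$ so that the difference collapses to the single tent expression; everything after that is bookkeeping.
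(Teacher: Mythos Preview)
The paper states this lemma without proof, so there is nothing to compare against; your argument is a correct and natural way to supply one. The explicit tent formula $|f_\delta(z)-f(z)|=\big(1-|z-a|/\delta\big)$ on $(a-\delta,a+\delta)$, obtained by reading off $f\equiv -f(a)$ on $(a-\delta,a)$ and $f\equiv f(a)$ on $[a,a+\delta)$, immediately yields all three claims.

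Two small remarks. First, you are right to flag that the strict inequality in the statement cannot hold for \emph{all} $z$: when $z\notin\bigcup_a(a-\delta,a+\delta)$ both sides vanish, and when $z=a\in\Delta_f$ lies in a single interval both sides equal $1$. Only $\le$ is true in general, and that is all the subsequent theorem uses. Second, your ``global'' identity $|f_\delta-f|=\sum_a(1-|z-a|/\delta)^+$ is literally correct only when the intervals $(a-\delta,a+\delta)$ are pairwise disjoint (guaranteed for $\delta<\delta_0/2$ but not for all $\delta<\delta_0$); in the overlap regime the very definition of $f_\delta$ is ambiguous, so this is really an issue with the paper's setup rather than your proof, and the inequality and continuity conclusions survive either way since on each interval one still has $|f_\delta-f|\le1\le\sum_a 1_{(a-\delta,a+\delta)}$.
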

\end{mdframed}

\begin{mdframed}
[style=MyFrame1]
\begin{theorem}
If $\eta_k = f(X_{k-1}/\sqrt{k})\xi_k$ where the right-continuous function $f$ has positive minimum gap, then $W^{(n)}$ converges weakly to the non-Gaussian process $\bds \Big(B_t,\int_0^tf(B_s/\sqrt{s})dB_s\Big)_{t\in[0,1]}\eds$.
\end{theorem}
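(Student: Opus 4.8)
The plan is to build on the structure already present: $U^{(n)}\Rightarrow B$ by Donsker, the second coordinate is the stochastic integral $V^{(n)}_t=\int_{(0,t]}f(U^{(n)}_{s-}/\sqrt s)\,dU^{(n)}_s$, and $(U^{(n)},V^{(n)})$ is tight (as noted above). The one real difficulty is that $f$ is discontinuous, so the stochastic-integral functional is not continuous at $f$; I would circumvent this by approximating $f$ with the continuous piecewise-linear functions $f_\delta$ of Lemma~\ref{fdelta}, proving the statement with $f_\delta$ in place of $f$, and then removing the approximation through a converging-together argument. Write $V^{(n),\delta}_t=\int_{(0,t]}f_\delta(U^{(n)}_{s-}/\sqrt s)\,dU^{(n)}_s$ and $A_\delta=\bigcup_{a\in\Delta_f}(a-\delta,a+\delta)$. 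Since $\Delta_f$ is $\delta_0$-separated it is closed, discrete and countable, so for $\delta<\delta_0/2$ the intervals composing $A_\delta$ are disjoint, $\bigcap_{\delta>0}A_\delta=\Delta_f$, and hence $\mathbb{P}(Z\in A_\delta)\downarrow\mathbb{P}(Z\in\Delta_f)=0$ as $\delta\downarrow0$, where $Z\sim N(0,1)$.

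I would then assemble three ingredients. (i) For each fixed $\delta\in(0,\delta_0)$, $(U^{(n)},V^{(n),\delta})\Rightarrow\big(B,\int_0^\cdot f_\delta(B_s/\sqrt s)\,dB_s\big)$. The $U^{(n)}$ are $L^2$-martingales with jumps bounded by $1/\sqrt n\to0$ and predictable brackets $\langle U^{(n)}\rangle_t=\floor{nt}/n\to t$, hence a ``good'' sequence of semimartingales, so the stability theory for stochastic integrals (Kurtz--Protter; Jacod--Shiryaev \cite{JacodShiryaev}, Ch.~IX) applies once the integrands are bounded, continuous and converge jointly with the integrators. On any $[\varepsilon,1]$ the map $(s,y)\mapsto f_\delta(y/\sqrt s)$ is bounded and continuous, and $U^{(n)}\Rightarrow B$ with $B$ continuous gives the joint convergence of $\big(U^{(n)},f_\delta(U^{(n)}_{\cdot-}/\sqrt\cdot)\big)$ there; I would apply the stability theorem on $[\varepsilon,1]$, bound the discarded piece $\int_{(0,\varepsilon\wedge\cdot]}$ in $L^2$ uniformly in $n$ by Doob's inequality (its norm is $\le2\sqrt\varepsilon$ since the integrand is bounded by $1$), and let $\varepsilon\to0$. (ii) $\lim_{\delta\to0}\limsup_n\mathbb{P}\big(\sup_{t\le1}|V^{(n)}_t-V^{(n),\delta}_t|>\eta\big)=0$ for every $\eta>0$. (iii) $\int_0^\cdot f_\delta(B_s/\sqrt s)\,dB_s\to\int_0^\cdot f(B_s/\sqrt s)\,dB_s$ uniformly on $[0,1]$ in $L^2$ as $\delta\to0$: by Doob's inequality, It\^o's isometry and $(f-f_\delta)^2\le1_{A_\delta}$ (Lemma~\ref{fdelta} with $\delta<\delta_0/2$), the square of this $L^2$-norm is at most $4\int_0^1\mathbb{P}(B_s/\sqrt s\in A_\delta)\,ds=4\,\mathbb{P}(Z\in A_\delta)\to0$. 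Given (i)--(iii), a standard converging-together lemma yields $(U^{(n)},V^{(n)})\Rightarrow\big(B,\int_0^\cdot f(B_s/\sqrt s)\,dB_s\big)$, and the limit is non-Gaussian since its cross-bracket $[B,\int_0^\cdot f(B_s/\sqrt s)dB_s]_t=\int_0^t f(B_s/\sqrt s)\,ds$ is not deterministic (otherwise $f(B_s/\sqrt s)$ would be a.s.\ constant for a.e.\ $s$, impossible because $\Delta_f\neq\emptyset$ forces $\mathbb{P}(f(Z)=1)\in(0,1)$).

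For ingredient (ii) I would observe that $V^{(n)}-V^{(n),\delta}$ is an $L^2$-martingale with bracket $\tfrac1n\sum_{k=1}^{\floor{nt}}(f-f_\delta)^2(X_{k-1}/\sqrt k)$ at time $t$, so Doob's inequality and $(f-f_\delta)^2\le1_{A_\delta}$ bound the probability in question by $\eta^{-2}\tfrac1n\sum_{k=1}^n\mathbb{P}(X_{k-1}/\sqrt k\in A_\delta)$. To control this uniformly in $n$ I would combine a local-limit bound $\mathbb{P}(X_{k-1}=j)\le Ck^{-1/2}$, a Hoeffding tail bound $\mathbb{P}(|X_{k-1}|>M\sqrt k)\le2e^{-M^2/2}$, and the $\delta_0$-separation of $\Delta_f$ (so that $A_\delta\cap[-M,M]$ is a union of $O(M/\delta_0)$ intervals of length $2\delta$, each carrying $O(\delta\sqrt k+1)$ admissible values of $X_{k-1}$), to get $\mathbb{P}(X_{k-1}/\sqrt k\in A_\delta)\le C(M/\delta_0)(\delta+k^{-1/2})+2e^{-M^2/2}$; averaging over $k\le n$ and then letting $n\to\infty$, $\delta\to0$, $M\to\infty$ in that order gives the claim. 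One can instead route the estimate through the already-established convergence $U^{(n)}\Rightarrow B$ and the identity $\int_0^1\mathbb{P}(B_s/\sqrt s\in A_\delta)\,ds=\mathbb{P}(Z\in A_\delta)$.

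The main obstacle is ingredient (i): the convergence $\int f_\delta(U^{(n)}_{\cdot-}/\sqrt\cdot)\,dU^{(n)}\Rightarrow\int f_\delta(B_\cdot/\sqrt\cdot)\,dB$ is \emph{not} a continuous-mapping statement — the total variation of $U^{(n)}$ blows up like $\sqrt n$ and the integral is no pathwise functional — so the cancellation encoded in the martingale structure is essential, and one must invoke (or reprove by hand, exploiting that $f_\delta$ is Lipschitz) a stability theorem of Kurtz--Protter / Jakubowski--M\'emin--Pag\`es type, after verifying that $(U^{(n)})$ is a good sequence. The persistent technical nuisance, dealt with by the split at $\varepsilon$ above, is the singularity of $s\mapsto1/\sqrt s$ at the origin: $f_\delta(\cdot/\sqrt s)$ is not continuous at $s=0$ (and the limiting integrand has no value there), but its contribution on $(0,\varepsilon]$ is uniformly $L^2$-small and may simply be discarded before passing to the limit.
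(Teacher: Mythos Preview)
Your proposal is correct and follows the same overall strategy as the paper: approximate $f$ by the continuous $f_\delta$ of Lemma~\ref{fdelta}, invoke Kurtz--Protter \cite{KP91} for the $f_\delta$-integral, and control the two remainders by It\^o/Doob isometries together with the bound $(f-f_\delta)^2\le\sum_{a\in\Delta_f}1_{(a-\delta,a+\delta)}$. The paper differs from you in two tactical respects. First, instead of a converging-together argument in the Skorokhod topology, it passes to an almost-sure framework via the Skorokhod representation theorem and proves convergence in probability of $V^{(n)}_t$ for each fixed $t$ (tightness of $(U^{(n)},V^{(n)})$ having been recorded separately at the start of the section). Second, and this is the point worth noting, for your ingredient~(ii) the paper uses no local-limit or Hoeffding bounds at all: it exploits the last clause of Lemma~\ref{fdelta}, that $g=(f-f_\delta)^2$ is itself \emph{continuous}, so once $U^{(n)}\to B$ a.s.\ one gets $\mathbb E\big[\int_0^t g(U^{(n)}_{s-}/\sqrt s)\,d[U^{(n)}]_s\big]\to\mathbb E\big[\int_0^t g(B_s/\sqrt s)\,ds\big]$ directly by bounded convergence, and the right-hand side is at most $t\sum_{a\in\Delta_f}\mathbb P(Z\in(a-\delta,a+\delta))$. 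Your direct random-walk estimate is valid and more self-contained; the paper's route is shorter but hinges on the slightly non-obvious continuity of $|f-f_\delta|$. Your explicit split at $s=\varepsilon$ to tame the singularity of $s\mapsto1/\sqrt s$ is also cleaner than the paper's somewhat casual assertion that $f_{\delta^*}(U^{(n)}_s/\sqrt s)\to f_{\delta^*}(B_s/\sqrt s)$ uniformly on all of $[0,t]$.
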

\end{mdframed}
\begin{proof}
As already observed, the first and second components of the process $W$ are one-dimensional Brownian motions.
However, the two-dimensional process is not a two-dimensional Brownian motion because the co-variation process
\begin{equation}
\Big\langle B,\int_0^{\cdot}f(B_s/\sqrt{s})dB_s\Big\rangle_t =\int_0^tf(B_s/\sqrt{s})ds
\end{equation}
is not a constant times $t$, as it would be for a two-dimensional Brownian motion.

By the Skorokhod representation theorem, the convergence of $U^{(n)}$ to a Brownian motion can be assumed to be almost sure, uniformly on $[0,1]$. Call $B$ the limiting process. We shall establish that for each $t\ge0$,
$$\int_{(0,t]}f(U^{(n)}_{s-}/\sqrt{s})dU^{(n)}_s \overset{\text{prob.}}{\underset{n\uparrow\infty}{\longrightarrow}} \int_0^tf(B_s/\sqrt{s})dB_s.$$
This will immediately imply the joint convergence in probability for any collection of times $t_1<\ldots<t_d$, thus establishing that the limiting finite-dimensional distributions are those of the process $\left(B_t,\int_0^tf(B_s/\sqrt{s})dB_s\right)$.

Fix $t\ge0$ and $\varepsilon>0$. By the $L^2$-isometry for stochastic integrals with respect to the Brownian motion,
\begin{eqnarray*}
\lefteqn{\mathbb{E}\left[\left(\int_0^tf_\delta(B_s/\sqrt{s})dB_s-\int_0^tf(B_s/\sqrt{s})dB_s\right)^2\right]}\\
& = & \mathbb{E}\left[\int_0^t\big(f_\delta(B_s/\sqrt{s})-f(B_s/\sqrt{s})\big)^2ds\right]\ =\ \int_0^t\mathbb{E}\left[\big(f_\delta(B_s/\sqrt{s})-f(B_s/\sqrt{s})\big)^2\right]ds
\end{eqnarray*}
where the right member goes to zero as $\delta\to0$, by the bounded convergence theorem and the fact that for each $s\in(0,t]$, $\mathbb{P}(B_s/\sqrt{s}\in\Delta_f) = 0$ so that
$f_\delta(B_s/\sqrt{s}) \longrightarrow f(B_s/\sqrt{s})$ a.s. as $\delta\to0$. Since convergence in $L^2$ implies convergence in probability, there is $\delta_{\varepsilon,1}\in(0,\delta_0)$ such that for all $0<\delta\le\delta_{\varepsilon,1}$,
\begin{equation}\label{deltaone}
\mathbb{P}\left(\left|\int_0^tf_\delta(B_s/\sqrt{s})dB_s-\int_0^tf(B_s/\sqrt{s})dB_s\right|>\varepsilon\right)<\varepsilon.
\end{equation}
Since almost surely, the amount of time that the Brownian motion $B$ spends in $\Delta_f$ has Lebesgue measure zero, there is $\delta_{\epsilon,2}\in(0,\delta_0)$ such that for all $0<\delta \leq \delta_{\epsilon,2}$
\begin{equation}\label{deltatwo}
\sum_{a\in\Delta_f}\mathbb{E}\left[\int_0^t 1_{(a-\delta,a+\delta)}(B_s/\sqrt{s})ds \right] < \varepsilon^3.
\end{equation}
Let $\delta^* =\min(\delta_{\varepsilon, 1}, \delta_{\varepsilon,2})$.
By the continuity of $f_{\delta^*}$, almost surely, $f_{\delta^*}(U^{(n)}_s/\sqrt{s})$ converges uniformly on $[0,t]$ to $f_{\delta^*}(B_s/\sqrt{s})$.
Thus, almost surely, $(U^{(n)}_s, f_{\delta^*}(U^{(n)}_s/\sqrt{s}) )$ converges uniformly on $[0,t ]$ to $(B_s, f_{\delta^*}(B_s/\sqrt{s}))$.
It then follows from \cite{KP91} that this implies that
$\int_{(0,t]}f_{\delta^*}(U^{(n)}_{s-}/\sqrt{s})dU^{(n)}_s$ converges in probability to $\int_0^t f_{\delta^*}(B_s/\sqrt{s})dB_s$ as $n\to \infty$.
Let $n_\epsilon>0$ such that for all $n\geq n_\epsilon$,
\begin{equation}\label{deltathree}
\mathbb{P}\left( \left|\int_{(0,t]}f_{\delta^*}(U^{(n)}_{s-}/\sqrt{s})dU^{(n)}_s-\int_0^t f_{\delta^*}(B_s/\sqrt{s})dB_s\right|>\varepsilon\right)< \varepsilon.
\end{equation}
Combining the above we have that for all $n\geq n_\epsilon$,
\begin{eqnarray*}
& & \mathbb{P}\left( \left|\int_{(0,t]}f(U^{(n)}_{s-}/\sqrt{s})dU^{(n)}_s -\int_0^tf(B_s/\sqrt{s})dB_s \right|>3\varepsilon\right)\\
&&\leq \mathbb{P}\left(\left|\int_{(0,t]}\left(f(U^{(n)}_{s-}/\sqrt{s})-f_{\delta^*}(U^{(n)}_{s-}/\sqrt{s})\right)dU^{(n)}_s\right|>\varepsilon\right)\\
&&\qquad + \mathbb{P}\left(\left|\int_{(0,t]} f_{\delta^*}(U^{(n)}_{s-}/\sqrt{s})dU^{(n)}_s-\int_0^tf_{\delta^*}(B_s/\sqrt{s})dB_s\right|> \varepsilon \right)\\
&& \qquad + \mathbb{P}\left(\left|\int_0^t f_{\delta^*}(B_s/\sqrt{s})dB_s-\int_0^t f(B_s/\sqrt{s})dB_s \right| >\varepsilon\right)\\
&&\leq \frac{1}{\epsilon^2}\mathbb{E}\left[\left|\int_{(0,t]}\left(f(U^{(n)}_{s-}/\sqrt{s})-f_{\delta^*}(U^{(n)}_{s-}/\sqrt{s})\right)dU^{(n)}_s\right|^2\right] + 2\varepsilon\\
&& =  \frac{1}{\epsilon^2}\mathbb{E}\left[\int_{(0,t]}\left|f(U^{(n)}_{s-}/\sqrt{s})-f_{\delta^*}(U^{(n)}_{s-}/\sqrt{s})\right|^2d[U^{(n)},U^{(n)}]_s \right] + 2\varepsilon\\
\end{eqnarray*}
where we used Markov's inequality, \eqref{deltaone} and \eqref{deltathree} for the second inequality, and the It\^o isometry for the equality.
Recall that
$$[U^{(n)},U^{(n)}]_t = \sum_{0<s\le t}(\Delta U^{(n)}_s)^2 = \sum_{k=1}^{\floor{nt}}(\xi_k/\sqrt{n})^2 = \frac{\floor{nt}}{n}$$
and denote by $g$ the function $(f-f_{\delta^*})^2$. Then
\begin{eqnarray*}
\int_{(0,t]}g(U^{(n)}_{s-}/\sqrt{s})d[U^{(n)},U^{(n)}]_s & = & \sum_{0<s\le t}g(U^{(n)}_{s-}/\sqrt{s})(\Delta U^{(n)}_s)^2\ =\ \frac1n\sum_{k=1}^{\floor{nt}}g(X_{k-1}/\sqrt{k})\\
& = & \int_0^{\floor{nt}/n}g\left(\left(U^{(n)}_{s-}/\sqrt{s}\right)\sqrt{ns/(\floor{ns}+1)}\right)ds\\
& \le & \int_0^tg\left(\left(U^{(n)}_{s-}/\sqrt{s}\right)\sqrt{ns/(\floor{ns}+1)}\right)ds
\end{eqnarray*}
and
\begin{eqnarray}
\lefteqn{\mathbb{P}\left(\left|\int_{(0,t]}f(U^{(n)}_{s-}/\sqrt{s})dU^{(n)}_s -\int_0^tf(B_s/\sqrt{s})dB_s \right|>3\varepsilon\right)}\nonumber\\
& \le & \frac{1}{\epsilon^2}\int_0^tg\left(\left(U^{(n)}_{s-}/\sqrt{s}\right)\sqrt{ns/(\floor{ns}+1)}\right)ds+\varepsilon\label{longineq}
\end{eqnarray}
By the continuity of $g$ and the  bounded convergence theorem, as $n\to \infty$,
\begin{equation}
\mathbb{E}\left[\int_0^tg\left(\left(U^{(n)}_{s-}/\sqrt{s}\right)\sqrt{ns/(\floor{ns}+1)}\right)ds\right] \longrightarrow \mathbb{E}\left[\int_0^tg(B_s/\sqrt{s})ds\right].
\end{equation}
The right-hand side can further be evaluated:
$$\mathbb{E}\left[\int_0^tg(B_s/\sqrt{s})ds\right]\le\sum_{a\in\Delta_f}\mathbb{E}\left[\int_0^t1_{(a-\delta^*,a+\delta^*)}(B_s/\sqrt{s})ds\right]<\varepsilon^3,$$
where we used Lemma \ref{fdelta} for the first inequality and \eqref{deltatwo} for the second.

It follows that there is an $n_\varepsilon' \geq n_\varepsilon$ such that for all $n\geq n_\varepsilon'$,
\begin{equation}
\mathbb{E}\left[\int_0^tg\left(\left(U^{(n)}_{s-}/\sqrt{s}\right)\sqrt{ns/(\floor{ns}+1)}\right)ds\right] \leq 2\varepsilon^3.
\end{equation}
Substituting this in \eqref{longineq}, we find that for all $n\geq n_\varepsilon'$,
\begin{equation}
\mathbb{P}\left(\left|\int_{(0,t]}f(U^{(n)}_{s-}/\sqrt{s})dU^{(n)}_s-\int_0^tf(B_s/\sqrt{s})dB_s\right| > 3\varepsilon\right)\leq 2\varepsilon + 2\varepsilon = 4\varepsilon.
\end{equation}
Since $\varepsilon>0$ was arbitrary, the desired convergence in probability follows.
\end{proof}
\begin{mdframed}
[style=MyFrame1]
\begin{corollary}
Suppose $\bds V^{(n)}_t = \frac1{\sqrt{n}}\sum_{k=1}^{\floor{nt}}\sgn(X_{k-1})\xi_k\eds$, where $\sgn(0)=-1$. Then $W^{(n)}$ converges weakly to the two-dimensional non-Gaussian process
$\bds \Big(B_t,\int_0^t\sgn(B_s)dB_s\Big)_{t\in[0,1]}\eds$.
\end{corollary}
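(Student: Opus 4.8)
The plan is to obtain this as a specialization of the preceding Theorem applied to $f=\sgn$, once one small discrepancy in the hypotheses is handled. Observe that $\sgn$ has a single discontinuity, so $\Delta_f=\{0\}$ is non-empty, and the positive-minimum-gap requirement holds vacuously with $\delta_0=+\infty$ since there is no second jump point. The one hypothesis not literally satisfied is right-continuity: with the convention $\sgn(0)=-1$, $\sgn$ is left-continuous at $0$. I would get around this by passing to the right-continuous modification $\widetilde\sgn$, defined by $\widetilde\sgn(z)=\sgn(z)$ for $z\neq0$ and $\widetilde\sgn(0)=+1$, and to the companion process $\widetilde V^{(n)}_t=\frac1{\sqrt n}\sum_{k=1}^{\floor{nt}}\widetilde\sgn(X_{k-1})\xi_k$ (noting that $\widetilde\sgn(X_{k-1})=\widetilde\sgn(X_{k-1}/\sqrt k)$ since $\sqrt k>0$).

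Applying the Theorem with $f=\widetilde\sgn$ then yields that $(U^{(n)},\widetilde V^{(n)})$ converges weakly to $\big(B_t,\int_0^t\widetilde\sgn(B_s/\sqrt s)\,dB_s\big)_{t\in[0,1]}=\big(B_t,\int_0^t\widetilde\sgn(B_s)\,dB_s\big)_{t\in[0,1]}$. Since $\mathbb{P}(B_s=0)=0$ for each $s>0$, the random set $\{s\in[0,t]:B_s=0\}$ has zero Lebesgue measure almost surely, so $\widetilde\sgn(B_s)$ and $\sgn(B_s)$ agree $ds$-a.e.\ on $[0,t]$, whence $\int_0^t\widetilde\sgn(B_s)\,dB_s=\int_0^t\sgn(B_s)\,dB_s$ a.s. Thus the process named in the corollary is the weak limit of $(U^{(n)},\widetilde V^{(n)})$.

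It remains to see that $(U^{(n)},V^{(n)})$ and $(U^{(n)},\widetilde V^{(n)})$ have the same weak limit, for which (by the converging-together lemma, the first coordinates being identical) it is enough to show $\sup_{t\in[0,1]}\big|\widetilde V^{(n)}_t-V^{(n)}_t\big|\to0$ in probability. Since $\widetilde\sgn(x)-\sgn(x)=2\cdot 1_{\{x=0\}}$, the difference $\widetilde V^{(n)}_t-V^{(n)}_t=\frac2{\sqrt n}\sum_{k=1}^{\floor{nt}}1_{\{X_{k-1}=0\}}\xi_k$ is, in $t$, an $L^2$-martingale with orthogonal increments, so Doob's inequality gives
$$\mathbb{E}\Big[\sup_{t\in[0,1]}\big(\widetilde V^{(n)}_t-V^{(n)}_t\big)^2\Big]\le\frac{16}{n}\sum_{k=1}^{n}\mathbb{P}(X_{k-1}=0).$$
By the local central limit theorem — equivalently Stirling applied to $\mathbb{P}(X_{2j}=0)=\binom{2j}{j}2^{-2j}$ — one has $\mathbb{P}(X_{k-1}=0)=O(k^{-1/2})$, hence $\sum_{k=1}^n\mathbb{P}(X_{k-1}=0)=O(\sqrt n)$ and the right-hand side is $O(n^{-1/2})\to0$, giving the required uniform $L^2$ (hence in-probability) convergence.

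The only step that is not a purely formal invocation of the Theorem is this last comparison: verifying that the value of $\sgn$ at its single jump is asymptotically immaterial. That is exactly the local-CLT estimate above, so I expect no serious obstacle; the rest is bookkeeping.
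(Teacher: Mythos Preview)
Your argument is correct. The paper records the corollary without proof, treating it as an immediate specialization of the preceding Theorem; it does not comment on the left-/right-continuity mismatch you flag. Implicitly the paper is relying on the fact that nothing in the Theorem's proof uses right-continuity in an essential way: the only role of that hypothesis is to make the piecewise-linear approximant $f_\delta$ match $f$ on both sides of each jump, and the same construction with the obvious sign flip works for a left-continuous $f$. Your route is different but equally valid: you keep the Theorem exactly as stated, apply it to the right-continuous modification $\widetilde\sgn$, and then control the discrepancy $V^{(n)}-\widetilde V^{(n)}$ directly via Doob's inequality and the local CLT bound $\sum_{k\le n}\mathbb{P}(X_{k-1}=0)=O(\sqrt n)$. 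This buys you a self-contained argument that does not require revisiting the Theorem's proof, at the modest cost of one extra estimate; the paper's (tacit) approach avoids that estimate but asks the reader to accept that the right-continuity hypothesis is cosmetic.
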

\end{mdframed}
\section{Ergodicity of the GBRW transformation -- A characterisation}\label{Section:Ergodic}

In this section we study the ergodicity of the measure-preserving transformation $(X_n)_{n\ge0}\to(Y_n)_{n\ge0}$. More specifically, let $\Omega$ be the space of integer-valued sequences $x=(x_n)_{n\geq0}$ such that $x_0=0$ and for any $n\geq1$, $x_n-x_{n-1}\in\{-1,+1\}$, endowed with the product sigma-field and the probability measure under which the coordinate map is a simple symmetric random walk started at 0. Given a sequence of functions $\psi_n:\{-1,+1\}^n\to\{-1,+1\}$ ($\psi_0$ is a constant taking value -1 or +1), we let $T$ be the transformation on $\Omega$ such that, with $y=T(x)$, $y_n=\sum_{k=1}^nv_k$, where
$v_k=\psi_{k-1}(u_1,\ldots,u_{k-1})u_k$ and $u_k=x_k-x_{k-1}$. We have seen that $T$ is measure-preserving. Here we ask whether $T$ is ergodic.

We have seen that the discrete-time version of the L\'evy transformation
$$\psi_{k-1}(x_1,\ldots,x_{k-1})=\sgn(x_1+\ldots+x_{k-1})$$
is not ergodic (see \cite{Fujita}). Dubins and Smorodinsky \cite{DubinsSmorodinsky} modified this transformation by first letting $\sgn(0)=0$ and then skipping any flat portions of the path thus produced, and showed that such a transformation is ergodic. Next we give a necessary and sufficient condition, in terms of the sequence $(\psi_n)_{n\ge0}$, for the GBRW to be ergodic.

We start by observing that the ergodicity of $T$ is equivalent to that of $\tau=\Delta\circ T\circ\Delta^{-1}$ defined on the space $\Theta=\{-1,+1\}^\mathbb{N}$, where $\Delta:\Omega\to\Theta$ is the difference operator; that is, for $x\in\Omega$, $u=\Delta(x)$ is the sequence in $\{-1,+1\}^\mathbb{N}$ defined by $u_k=x_k-x_{k-1}$ ($u_1=x_1$). The measure that $\tau$ preserves is the Bernoulli measure $\mu$ defined as
$$\mu\big(\{u\in\Theta:\ u_1=\varepsilon_1,\ldots,u_n=\varepsilon_n\}\big)=\frac1{2^n},$$
for any $\varepsilon_1,\ldots,\varepsilon_n\in\{-1,+1\}$.
For each $n\ge1$, we let $\mu_n$ be the measure induced from $\mu$ by the projection $\pi_n$ on $\Theta_n$: $\mu_n(A_n)=\mu(\pi_n^{-1}(A_n))$. We also let $\tau_n$ be the mapping $\tau_n:(u_1,\ldots,u_n)\to(v_1,\ldots,v_n)$ such that $v_k=\psi_{k-1}(u_1,\ldots,u_{k-1})u_k$. We observe that $\tau_n\circ\pi_n=\pi_n\circ\tau$, that $\tau$ and $\tau_n$ are bijections, and deduce that
$$\tau_n^{-1}(\pi_n(A))=\tau_n^{-1}(\pi_n(\tau(\tau^{-1}(A))))=\tau_n^{-1}(\tau_n(\pi_n(\tau^{-1}(A))))=\pi_n(\tau^{-1}(A)).$$
\begin{mdframed}
[style=MyFrame1]
\begin{lemma}
The transformation $\tau$ is ergodic if and only if, for any $n\geq1$, whenever $\tau_n^{-1}(A_n)=A_n$, we must have $\mu_n(A_n)=0$ or $\mu_n(A_n)=1$.
\end{lemma}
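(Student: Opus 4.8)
The plan is to prove the two implications separately. The forward direction is a direct lift of finite-dimensional invariant sets to cylinder sets; the converse is the substantive part, and I would handle it by a bounded-martingale argument (L\'evy's zero--one law).

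First, for the forward direction, assume $\tau$ is ergodic and take $A_n\subseteq\Theta_n$ with $\tau_n^{-1}(A_n)=A_n$. Put $A=\pi_n^{-1}(A_n)$. Taking preimages in the identity $\pi_n\circ\tau=\tau_n\circ\pi_n$ recorded before the statement gives $\tau^{-1}\circ\pi_n^{-1}=\pi_n^{-1}\circ\tau_n^{-1}$, hence $\tau^{-1}(A)=\pi_n^{-1}(\tau_n^{-1}(A_n))=\pi_n^{-1}(A_n)=A$, so $A$ is $\tau$-invariant. Ergodicity yields $\mu(A)\in\{0,1\}$, and since $\mu_n(A_n)=\mu(\pi_n^{-1}(A_n))=\mu(A)$, the finite-dimensional dichotomy follows.

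For the converse, fix $A$ with $\tau^{-1}(A)=A$ and let $\mathcal{F}_n=\sigma(u_1,\dots,u_n)$, which is precisely $\pi_n^{-1}$ of the power set of $\Theta_n$. Write $\mathbb{E}[\1_{A}\mid\mathcal{F}_n]=g_n\circ\pi_n$ for a $[0,1]$-valued function $g_n$ on $\Theta_n$; then $(g_n\circ\pi_n)_{n\ge1}$ is a bounded martingale converging $\mu$-a.s.\ to $\1_{A}$, since $A$ lies in $\bigvee_n\mathcal{F}_n$. The crucial step is that each $g_n$ is $\tau_n$-invariant. Being a bijection of the finite set $\Theta_n$, $\tau_n$ preserves the uniform measure $\mu_n$; combining this with the measure preservation of $\tau$ and with $\tau^{-1}(A)=A$ gives, for every $B_n\subseteq\Theta_n$,
\[
\int_{B_n}g_n\,d\mu_n=\mu\big(A\cap\pi_n^{-1}(B_n)\big)=\mu\big(A\cap\pi_n^{-1}(\tau_n(B_n))\big)=\int_{B_n}g_n\circ\tau_n\,d\mu_n,
\]
where the middle equality comes from applying $\tau^{-1}$ to $A\cap\pi_n^{-1}(\tau_n(B_n))$ together with $\tau^{-1}(\pi_n^{-1}(\tau_n(B_n)))=\pi_n^{-1}(B_n)$, and the last equality is the change of variables under the $\mu_n$-preserving bijection $\tau_n$. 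Since $B_n$ is arbitrary and $\mu_n$ has full support, $g_n=g_n\circ\tau_n$ identically on $\Theta_n$.

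Finally I would invoke the hypothesis. Because $\mu_n$ has full support, a $\tau_n$-invariant subset of $\Theta_n$ of measure $0$ is empty and one of measure $1$ is all of $\Theta_n$, so the hypothesis says the only $\tau_n$-invariant subsets are $\emptyset$ and $\Theta_n$. Every level set $\{g_n=c\}$ is $\tau_n$-invariant, and these partition $\Theta_n$, so exactly one is non-empty: $g_n$ is constant, necessarily equal to $\int g_n\,d\mu_n=\mu(A)$. Passing to the a.s.\ limit, $\1_{A}=\lim_n g_n\circ\pi_n=\mu(A)$ $\mu$-a.s., and since $\1_{A}\in\{0,1\}$ this forces $\mu(A)\in\{0,1\}$, i.e.\ $\tau$ is ergodic. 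I expect the main obstacle to be the $\tau_n$-invariance of $g_n$ — the bookkeeping among $\pi_n$, $\tau$ and $\tau_n$ and the change of variables that converts set-invariance of $A$ into function-invariance of the conditional expectation; the cylinder lift, the level-set argument, and the martingale convergence are all routine.
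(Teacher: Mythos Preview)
Your proof is correct. For the easy direction (ergodicity of $\tau$ implies the finite-dimensional dichotomy) you and the paper argue identically, lifting $A_n$ to the cylinder $\pi_n^{-1}(A_n)$ via $\tau^{-1}\circ\pi_n^{-1}=\pi_n^{-1}\circ\tau_n^{-1}$.

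For the substantive direction you take a genuinely different route. The paper works with the projected sets $\pi_n(A)$: from $\tau^{-1}(A)=A$ it deduces $\tau_n^{-1}(\pi_n(A))=\pi_n(A)$, so each $\mu_n(\pi_n(A))\in\{0,1\}$; since $\pi_n^{-1}(\pi_n(A))$ is decreasing in $n$, the paper concludes by passing to the limit and identifying $A$ with $\bigcap_n\pi_n^{-1}(\pi_n(A))$. Your argument instead passes through the conditional expectations $g_n\circ\pi_n=\mathbb{E}[\1_A\mid\mathcal{F}_n]$, shows each $g_n$ is $\tau_n$-invariant (via the change-of-variables computation) and hence constant equal to $\mu(A)$, and then applies L\'evy's zero--one law. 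Your approach costs a martingale convergence theorem but buys a cleaner measure-theoretic conclusion: it sidesteps the identification of $A$ with $\bigcap_n\pi_n^{-1}(\pi_n(A))$, which is not automatic for a general measurable invariant set and is the one point in the paper's argument that deserves a closer look.
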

\end{mdframed}
\begin{proof}
Let $A$ be be such that $\tau^{-1}(A)=A$. Then we must have that, for any $n\ge1$, $\tau_n^{-1}(\pi_n(A))=\pi_n(\tau^{-1}(A))=\pi_n(A)$. Since $\pi_{n+1}^{-1}(\pi_{n+1}(A)) = \pi_{n+1}(A)\times\Theta \subset \pi_n(A)\times\Theta = \pi_n^{-1}(\pi_n(A))$, the sequence $(\mu_n(\pi_n(A)))_{n\ge1}$ is non-increasing and must be constant, equal to 0 or 1, for $n$ large enough. As $A=\lim_n\pi_n(A)\times\Theta$, we must have that $\mu(A)=0$ or $\mu(A)=1$, and that $\tau$ must be ergodic.

Conversely, if $\tau$ is ergodic and for $n\geq1$, $A_n$ is such that $\tau_n^{-1}(A_n)=A_n$, then $$\tau^{-1}(\pi_n^{-1}(A_n))=\pi_n^{-1}(\tau_n^{-1}(A_n))=\tau_n^{-1}(A_n)\times\Theta=A_n\times\Theta=\pi_n^{-1}(A_n).$$
It follows that $A_n\times\Theta=\pi_n^{-1}(A_n)$ has measure 0 or 1. It immediately follows that $A_n$ itself has measure 0 or 1.
\end{proof}

We are now ready to state the main result of this section.
\begin{mdframed}
[style=MyFrame1]
\begin{theorem}\label{ergodicth}
Let $T$ be the measure-preserving transformation associated with a sequence of functions $(\psi_n)_{n\geq0}$. $T$ is ergodic (i.e. $\tau$ is ergodic) if and only \begin{enumerate}
\item $\psi_0=-1$ and $\forall n\geq1$, $\prod_{(u_1,\ldots,u_n)\in\Theta_n}\psi_n(u_1,\ldots,u_n)=-1$, or equivalently
\item $\psi_0=-1$ and $\forall n\geq1$, $\max(u_1,\ldots,u_n)$ appears in representation \eqref{genphibeta}; that is $\beta_{n+1,\{1,\ldots,n\}}=1$.
\end{enumerate}
\end{theorem}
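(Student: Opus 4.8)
The plan is to combine the reduction established in the preceding lemma with a skew-product analysis of the finite maps $\tau_n$. By that lemma, $\tau$ is ergodic if and only if, for every $n\ge1$, the bijection $\tau_n$ of the finite set $\Theta_n=\{-1,+1\}^n$ leaves invariant no subset of $\mu_n$-measure strictly between $0$ and $1$. A $\tau_n$-invariant set is a union of cycles of $\tau_n$, and since $\mu_n$ is uniform on $\Theta_n$, such a union is either trivial or proper unless $\tau_n$ has a single cycle. So the first step is simply to record that ergodicity of $\tau$ is equivalent to the statement: for every $n\ge1$, $\tau_n$ is a single $2^n$-cycle on $\Theta_n$.

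Next I would exploit the recursion. Writing $w=(u_1,\dots,u_n)$ and identifying $\Theta_{n+1}\cong\Theta_n\times\{-1,+1\}$, one has $\tau_{n+1}(w,u_{n+1})=(\tau_n(w),\,\psi_n(w)u_{n+1})$, i.e. $\tau_{n+1}$ is a $\mathbb{Z}_2$-extension of $\tau_n$ driven by the cocycle $\psi_n$. Iterating gives $\tau_{n+1}^k(w,u_{n+1})=\big(\tau_n^k w,\;u_{n+1}\prod_{j=0}^{k-1}\psi_n(\tau_n^j w)\big)$. Assuming inductively that $\tau_n$ is a $2^n$-cycle, taking $k=2^n$ returns the first coordinate to $w$ for the first time and multiplies the second by $\prod_{w'\in\Theta_n}\psi_n(w')$ — a product over \emph{all} of $\Theta_n$, hence the same for every $w$. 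If this product equals $+1$, then $\tau_{n+1}$ splits into two $2^n$-cycles; if it equals $-1$, then $\tau_{n+1}^{2^n}(w,u_{n+1})=(w,-u_{n+1})$ while $\tau_{n+1}^{2^{n+1}}=\mathrm{id}$, so $\tau_{n+1}$ is a single $2^{n+1}$-cycle. Starting from the base case $\tau_1(u_1)=\psi_0 u_1$ (a transposition of $\{-1,+1\}$ exactly when $\psi_0=-1$, the identity otherwise), an induction then yields: $\tau_n$ is a $2^n$-cycle for every $n$ if and only if $\psi_0=-1$ and $\prod_{w\in\Theta_n}\psi_n(w)=-1$ for all $n\ge1$, which is precisely condition~(1).

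Finally I would verify that (1) and (2) coincide by substituting the representation $\psi_n(u_1,\dots,u_n)=\prod_{K\in\mathbb{K}(n)}u_{[K]}^{\beta_{n+1,K}}$ from \eqref{genphibeta} and evaluating $\prod_{w\in\Theta_n}w_{[K]}$ for each $K$ separately (the $\beta$'s do not depend on $w$, and $\{-1,+1\}$ is abelian). For $K=\emptyset$ the factor is $(-1)^{2^n}=1$ since $n\ge1$; for $K\neq\emptyset$, $w_{[K]}=\max_{k\in K}w_k$ equals $-1$ on exactly the $2^{\,n-|K|}$ points $w$ with $w_k=-1$ for all $k\in K$, so the factor is $(-1)^{2^{\,n-|K|}}$, which is $-1$ only when $|K|=n$, i.e. $K=\{1,\dots,n\}$. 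Hence $\prod_{w\in\Theta_n}\psi_n(w)=(-1)^{\beta_{n+1,\{1,\dots,n\}}}$, so (1) holds iff $\beta_{n+1,\{1,\dots,n\}}=1$ for all $n$, i.e. iff (2) holds. The only genuinely substantive point is the skew-product dichotomy in the second paragraph — the cyclic-versus-splitting behaviour of a $\mathbb{Z}/2$-extension of a full cyclic permutation, governed by the total product of the cocycle; the reduction to transitivity of $\tau_n$ and the closing arithmetic of $u_{[K]}$ are routine.
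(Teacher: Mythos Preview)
Your proposal is correct and follows essentially the same route as the paper: reduce via the preceding lemma to the requirement that each $\tau_n$ be a single $2^n$-cycle, then analyze $\tau_{n+1}$ as a $\mathbb{Z}_2$-extension of $\tau_n$ and observe that the return map after $2^n$ steps multiplies the last coordinate by $\prod_{w\in\Theta_n}\psi_n(w)$, yielding the product condition; the equivalence with (2) via $\prod_{w\in\Theta_n}w_{[K]}=(-1)^{2^{n-|K|}}$ is identical to the paper's computation. Your skew-product/cocycle phrasing is a bit more structured than the paper's explicit orbit-tracking, but the content is the same.
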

\end{mdframed}
\begin{proof}
The key idea in the proof is the fact $\tau$ is ergodic if and only if, for any $n\geq1$, orbits of $\tau_n$ have period $2^n$ as otherwise a size $m<2^n$ orbit $O$ defines a proper subset of $\Theta_n$ such that $\tau_n^{-1}(O)=O$. For $n=1$, we clearly require that $\psi_0=-1$. Next we build $\tau_2$ by constructing the orbit started at $(-1,-1)$: $(-1,-1)\to(+1,-\psi_1(-1))\to(-1,-\psi_1(-1)\psi_1(+1))\to(+1,-\psi_1(+1))$. This orbit is of period $2^2$ if and only if $\psi_1(-1)\psi_1(+1)=-1$. We reason by induction and assume that each of $\tau_1,\ldots,\tau_n$ has a single orbit. Let $\theta_1$ be the vector in $\Theta_n$ made up of $n$ $(-1)$'s and $(\theta_1,\ldots,\theta_{2^n})$ be the orbit of $\tau_n$ started at $\theta_1$. Note that $\tau_n(\theta_{2^n})=\theta_1$. Let $\vartheta_1$ be the vector in $\Theta_{n+1}$ made up of $n+1$ $(-1)$'s. Then
$\vartheta_2=(\theta_2,-\psi_n(\theta_1))$, $\vartheta_3=(\theta_3,-\psi_n(\theta_1)\psi_n(\theta_2))$,\ldots,$\vartheta_{2^n}=\left(\theta_{2^n},-\prod_{k=1}^{2^n-1}\psi_n(\theta_k)\right)$ and
$\vartheta_{2^n+1}=\left(\theta_1,-\prod_{k=1}^{2^n}\psi_n(\theta_k)\right)$. The requirement that $\vartheta_{2^n+1}\neq\vartheta_1$ translates to
$$\prod_{(u_1,\ldots,u_n)\in\Theta_n}\psi_n(u_1,\ldots,u_n)=\prod_{k=1}^{2^n}\psi_n(\theta_k)=-1,$$
which establishes the first statement. The second statement is a direct application of representation \eqref{genphimaxth}:
\begin{eqnarray*}
-1 & = & \prod_{\Theta_n}\psi_n(u_1,\ldots,u_n)\ =\ \prod_{\Theta_n}\prod_{K\in\mathbb{K}(n)}u_{[K]}^{\beta_K}\ =\ \prod_{K\in\mathbb{K}(n)}\prod_{\Theta_n}u_{[K]}^{\beta_{n+1,K}}\ =\ \prod_{K\in\mathbb{K}(n)}\left(\prod_{\Theta_n}u_{[K]}\right)^{\beta_{n+1,K}}\\
& = & \prod_{K\in\mathbb{K}(n)}\left((-1)^{2^{n-|K|}}\right)^{\beta_{n+1,K}}\ =\ (-1)^{\beta_{n+1,\{1,\ldots,n\}}}.
\end{eqnarray*}
\end{proof}

We observe that to each transformation $\tau$ on $\Theta$ ($T$ on $\Omega$) corresponds an ergodic transformation $\tilde\tau$ on $\Theta$ ($\tilde{T}$ on $\Omega$) such that these transformations time-shifted (and space-shifted) coincide with high probability; that is
$$\mu\big(\{u:\ \omega_N(\tau(u))=\omega_N(\tilde\tau(u))\}\big)\ge1-2^{-N},$$
where $\omega_N(u)=(u_{n+N})_{n\ge1}$. The transformation $\tilde\tau$ is simply built from $\tau$ by changing the value of $\beta_{n+1,\{1,\ldots,n\}}$ to 1 whenever required.

\subsubsection*{An ergodic (slightly) modified discrete L\'evy transformation.} As already pointed out the discrete L\'evy transformation is not ergodic but a modification that skips flat portions of the path created by an adjustment to the definition of the $\sgn$ function ($\sgn(0)=0$) is ergodic. Here we suggest another (simpler) modification of the discrete L\'evy transformation.

We start by investigating representation \eqref{genphimaxth} for $\sgn(u_1+\ldots+u_n)$. As a latter is a symmetric function, the $\beta_{n,K}$'s must be equal for all subsets of equal size; that is, with a slight modification of the notation, writing $\beta_{n,|K|}$ for $\beta_{n,K}$,
$$\sgn(u_1+\ldots+u_n)=\prod_{K\in\mathbb{K}(n)}u_{[K]}^{\beta_{n,K}}
= \prod_{k=0}^n\bigg(\prod_{|K|=k}u_{[K]}\bigg)^{\beta_{n,k}}.$$
Now suppose $n\ge2m+1$ and let $(u_1,\ldots,u_n)$ be such that $\nu_n=|\{k:u_k=-1\}|=\ell$ for $\ell\le m$, and $s_n=u_1+\ldots+u_n=n-2\nu_n\ge1$. Letting $\ell=0$ yields
$$1 = \prod_{k=0}^n\bigg(\prod_{|K|=k}u_{[K]}\bigg)^{\beta_{n,k}} = (-1)^{\beta_{n,0}};\mbox{ that is }\beta_{n,0}=0.$$
For $\ell=1$, $\prod_{|K|=1}u_{[K]}=-1$ and, for $k\ge2$, $\prod_{|K|=k}u_{[K]}=1$. Therefore for such $(u_1,\ldots,u_n)$,
$$1 = \Bigg(\bigg(\prod_{|K|=1}u_{[K]}\bigg)^{\beta_{n,1}}\Bigg)\Bigg(\prod_{k=2}^n\bigg(\prod_{|K|=k}u_{[K]}\bigg)^{\beta_{n,k}}\Bigg) = (-1)^{\beta_{n,1}};\mbox{ that is }\beta_{n,1}=0.$$
This can be repeated and yields the fact that $\beta_{n,m}=0$ whenever $n\ge2m+1$.

For $m\le n\le2m$, again choosing $(u_1,\ldots,u_n)$ such that $\nu_n=|\{k:u_k=-1\}|=m$, and $s_n=n-2m\le0$, yields
$$-1 = \Bigg(\prod_{k=0}^{\ell_n}\bigg(\prod_{|K|=k}u_{[K]}\bigg)^{\beta_{n,k}}\Bigg)\Bigg(\prod_{k=\ell_n+1}^m\bigg(\prod_{|K|=k}u_{[K]}\bigg)^{\beta_{n,k}}\Bigg)\Bigg(\prod_{k=m+1}^n\bigg(\prod_{|K|=k}u_{[K]}\bigg)^{\beta_{n,k}}\Bigg),$$
where $\ell_n=\floor{(n-1)/2}$.
Since $\beta_{n,k}=0$ for $k\le\ell_n$ and $\prod_{|K|=k}u_{[K]}=1$ for $k\ge m+1$, the above identity becomes
$$-1 = \prod_{k=\ell_n+1}^m\bigg(\prod_{|K|=k}u_{[K]}\bigg)^{\beta_{n,k}} = \prod_{k=\ell_n+1}^m\bigg((-1)^{{m\choose k}}\bigg)^{\beta_{n,k}} = (-1)^{\sum_{k=\ell_n+1}^m{m\choose k}\beta_{n,k}};$$
that is,
$$\sum_{k=\ell_n+1}^m{m\choose k}\beta_{n,k} = 1 \mod2\mbox{ or equivalently }
\beta_{n,m} = 1 + \sum_{k=\ell_n+1}^{m-1}{m\choose k}\beta_{n,k} \mod2.$$
The figure below describes the array $\beta_{n,k}$ where an orange cell indicates that $\beta_{n,k}=0$ and a blue cell that $\beta_{n,k}=1$.
\begin{figure}[h]
	\includegraphics[height=6cm]{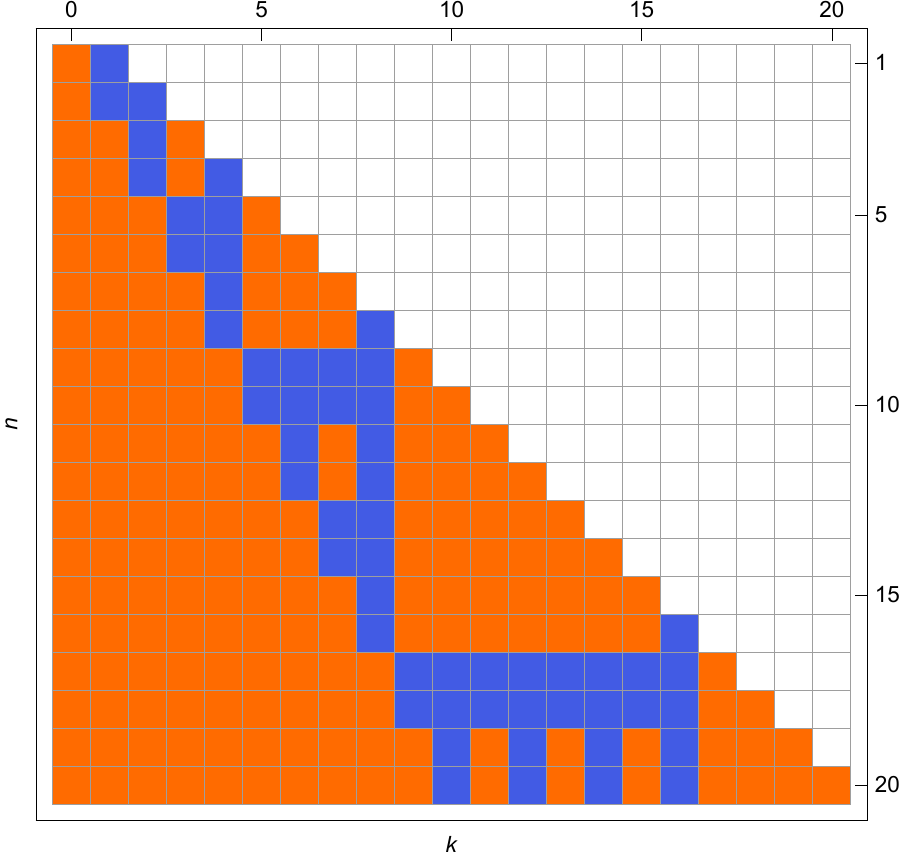}
	\qquad\includegraphics[height=6cm]{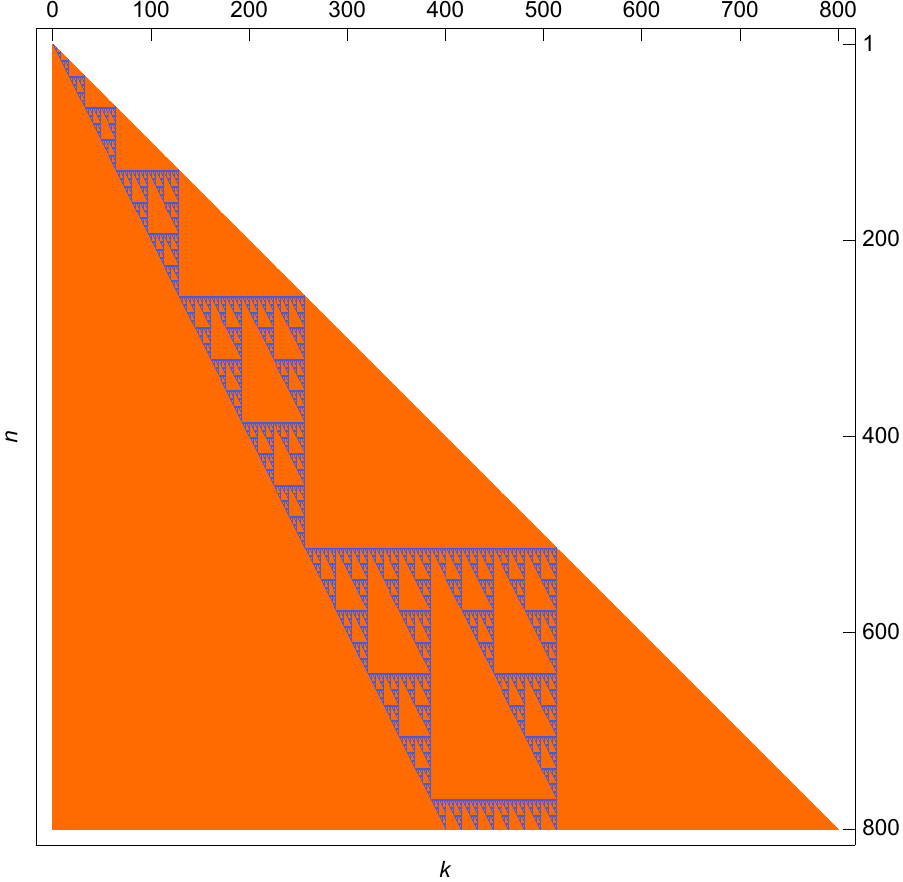}
	\caption{The $\beta_{n,k}$ array for $n\le20$ (left) and $n\le800$ (right)}
\end{figure}

For example,
$$\sgn(u_1+u_2) = u_1u_2\max(u_1,u_2)\mbox{ and }\sgn(u_1+u_2+u_3) = \max(u_1,u_2)\max(u_1,u_3)\max(u_2,u_3).$$

\begin{corollary}
The following slight adaptation of the discrete L\'evy transformation is ergodic: $\psi_0=-1$ and for $n\ge1$, $\psi_n(u_1,\ldots,u_n)=\sgnn_n(u_1+\ldots+u_n)$, where
$$\sgnn_n(s)=
\left\{\begin{array}{ll}
\sgn(s) & n\mbox{ is a power of 2 or }s>-n\\
1 &  n\mbox{ is not a power of 2 and }s\le-n
\end{array}
\right.$$
or equivalently that $\psi_0=-1$ and for $n\ge1$,
$$\psi_n(u_1,\ldots,u_n)=
\left\{\begin{array}{ll}
\sgn(u_1+\ldots+u_n) & n\mbox{ is a power of 2}\\
\max(u_1,\ldots,u_n)\sgn(u_1+\ldots+u_n) &  n\mbox{ is not a power of 2}
\end{array}
\right.$$
\end{corollary}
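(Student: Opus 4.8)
The plan is to invoke criterion~(1) of Theorem~\ref{ergodicth}: since $\psi_0=-1$ is part of the definition, it only remains to check that $\prod_{(u_1,\ldots,u_n)\in\Theta_n}\psi_n(u_1,\ldots,u_n)=-1$ for every $n\ge1$. First I would observe that, in both branches of the definition, $\psi_n(u_1,\ldots,u_n)$ coincides with $\sgn(u_1+\cdots+u_n)$ at every point of $\Theta_n$ except possibly at $(-1,\ldots,-1)$, where $u_1+\cdots+u_n=-n$; and there $\psi_n=\sgn(-n)=-1$ if $n$ is a power of $2$, while $\psi_n=1=-\sgn(-n)$ if $n$ is not. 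Hence $\prod_{\Theta_n}\psi_n$ equals $\prod_{\Theta_n}\sgn(u_1+\cdots+u_n)$ when $n$ is a power of $2$ and equals $-\prod_{\Theta_n}\sgn(u_1+\cdots+u_n)$ otherwise. Grouping the tuples of $\Theta_n$ by the number $\nu$ of coordinates equal to $-1$ and using $\sgn(0)=-1$, one gets $\prod_{\Theta_n}\sgn(u_1+\cdots+u_n)=(-1)^{S(n)}$, where $S(n)=\sum_{k\ge n/2}\binom nk$ is the number of tuples on which the sign is $-1$. So the whole statement reduces to the arithmetic fact that \emph{$S(n)$ is odd if and only if $n$ is a power of $2$}.

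To prove that fact I would split on the parity of $n$. If $n$ is odd, the symmetry $\binom nk=\binom n{n-k}$ gives $S(n)=\tfrac12\cdot2^n=2^{n-1}$, which is odd exactly when $n=1=2^0$. If $n=2m$ is even, the same symmetry gives $S(2m)=\binom{2m}{m}+\tfrac12\bigl(2^{2m}-\binom{2m}{m}\bigr)=2^{2m-1}+\tfrac12\binom{2m}{m}$, so $S(2m)\equiv\tfrac12\binom{2m}{m}=\binom{2m-1}{m-1}\pmod2$ because $2^{2m-1}$ is even. Now $\binom{2m-1}{m-1}=\binom{(m-1)+m}{m-1}$, so by Kummer's theorem it is odd precisely when the base-$2$ addition of $m-1$ and $m$ carries nowhere, i.e.\ when $m$ and $m-1$ have disjoint binary supports; since passing from $m$ to $m-1$ alters only the bits at and below the least significant $1$ of $m$, this holds iff $m$ has a single binary digit, i.e.\ iff $m$ is a power of $2$, i.e.\ iff $n=2m$ is. This gives the claim.

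Granting this, the proof finishes at once: if $n$ is a power of $2$ then $S(n)$ is odd and $\prod_{\Theta_n}\psi_n=(-1)^{S(n)}=-1$; if not, $S(n)$ is even and $\prod_{\Theta_n}\psi_n=-(-1)^{S(n)}=-1$. In either case criterion~(1) of Theorem~\ref{ergodicth} is satisfied, so $T$ is ergodic. It then remains only to record the (routine) equivalence of the two displayed formulas for $\psi_n$: $\max(u_1,\ldots,u_n)$ equals $-1$ when all $u_k=-1$ and $+1$ otherwise, so multiplying $\sgn(u_1+\cdots+u_n)$ by $\max(u_1,\ldots,u_n)$ flips exactly the value at $(-1,\ldots,-1)$ from $-1$ to $+1$ and changes nothing else, which is precisely the passage from $\sgn$ to $\sgnn_n$ when $n$ is not a power of $2$ (and no change when $n$ is).

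The one genuine obstacle is the even-$n$ parity computation, and inside it the recognition that $\tfrac12\binom{2m}{m}=\binom{2m-1}{m-1}$ combined with extracting the ``power of $2$'' condition via Kummer's (or Lucas') theorem; everything else is bookkeeping. One could alternatively read off the coefficient $\beta_{n+1,\{1,\ldots,n\}}$ of $\max(u_1,\ldots,u_n)$ in the representation of $\psi_n$ and appeal to criterion~(2) of Theorem~\ref{ergodicth}, but that route amounts to the same count and appears no shorter.
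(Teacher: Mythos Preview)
Your proof is correct and follows essentially the same route as the paper: both reduce to criterion~(1) of Theorem~\ref{ergodicth}, compute $\prod_{\Theta_n}\sgn(u_1+\cdots+u_n)$ by grouping on the number of $-1$'s, split on the parity of $n$, and for even $n=2m$ reduce the question to the parity of $\binom{2m-1}{m-1}$. The only real difference is that you invoke Kummer's theorem and the disjoint-support characterization to dispatch the parity of $\binom{2m-1}{m-1}$, whereas the paper carries out the equivalent Lucas'-theorem computation by writing out the base-$2$ digit tables explicitly; your version is a bit more compact, and you also make explicit the ``differs from $\sgn$ only at $(-1,\ldots,-1)$'' observation that the paper leaves implicit.
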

\begin{proof}
The case $n=1$ is trivial. Fix $n\geq2$,
$$\prod_{(u_1,\ldots,u_n)\in\Theta_n}\sgn(u_1+\ldots+u_n) = \prod_{k=0}^n\big(\sgn(2k-n)\big)^{{n\choose k}} =
\left\{\begin{array}{ll}
(-1)^{\sum_{k=0}^{(n-1)/2}{n\choose k}} & n\mbox{ odd}\\
(-1)^{\sum_{k=0}^{n/2}{n\choose k}} & n\mbox{ even}
\end{array}
\right.$$
However, for $n$ odd, $\sum_{k=0}^{(n-1)/2}{n\choose k} = \sum_{k=(n+1)/2}^n{n\choose k} = 2^{n-1}$ is even. And for $n$ even, $\sum_{k=0}^{n/2}{n\choose k}$ is even if and only if $\frac12{n\choose n/2}$ is even. We use Lucas' Theorem \cite{Lucas} to show that $\frac12{2n\choose n}={2n-1\choose n-1}$ is odd if and only if $n$ is a power of 2.

Suppose that $n=2^\ell$ for $\ell\in\mathbb{N}$. Then
$$n-1=\sum_{k=0}^{\ell-1}2^k\mbox{ and }2n-1=\sum_{k=0}^\ell2^k$$
and every digit in the base 2 expansion of $n-1$ is less than or equal to the corresponding digit in the base 2 expansion of $2n-1$, thus proving that ${2n-1\choose n-1}$ is not divisible by 2.

Now, suppose that $2^\ell<n<2^{\ell+1}$ for $\ell\in\mathbb{N}$ so that
$n=2^\ell+\sum_{k=\kappa+1}^{\ell-1}\alpha_k2^k+2^\kappa$, for $\alpha_{\kappa+1},\ldots,\alpha_{\ell-1}\in\{0,1\}$ and $0\le\kappa<\ell$.

If $\kappa>0$, then
$$n-1=2^\ell+\sum_{k=\kappa+1}^{\ell-1}\alpha_k2^k+\sum_{k=0}^{\kappa-1}2^k\mbox{ and }2n-1=2^{\ell+1}+\sum_{k=\kappa+1}^{\ell-1}\alpha_k2^{k+1}+\sum_{k=0}^\kappa2^k$$
so that the digits in the base 2 expansions of $n-1$ and $2n-1$ are:

\begin{center}
\begin{tabular}{|c|c|c|c|c|c|c|c|c|c|c|}\hline
$k$ & $\ell+1$ & $\ell$ & $\ell-1$ & \ldots & $\kappa+2$ & $\kappa+1$ & $\kappa$ & $\kappa-1$ & \ldots & 0\\\hline
$2n-1$ & 1 & $\alpha_{\ell-1}$ & $\alpha_{\ell-2}$ & \ldots & $\alpha_{\kappa+1}$ & 0 & 1 & 1 & \ldots & 1\\\hline
$n-1$ & 0 & 1 & $\alpha_{\ell-1}$ & \ldots & $\alpha_{\kappa+2}$ & $\alpha_{\kappa+1}$ & 0 & 1 & \ldots & 1\\\hline
\end{tabular}
\end{center}
If $\alpha_{\kappa+1}=\ldots=\alpha_{\ell-1}=1$, then the digit of order $\kappa+1$ in the base 2 expansion of $n-1$ is greater than the corresponding digit in the base 2 expansion of $2n-1$, and ${2n-1\choose n-1}$ is divisible by 2.

Suppose at least one of $\alpha_{\kappa+1},\ldots,\alpha_{\ell-1}$ equals 0 and let  $k^*=\max\{k\in\{\kappa+1,\ldots,\ell-1\}:\ \alpha_k=0\}$. Then the digit of order $k^*+1$ in the base 2 expansion of $n-1$ is greater than the corresponding digit in the base 2 expansion of $2n-1$, and ${2n-1\choose n-1}$ is divisible by 2.

If $\kappa=0$, then
$$n-1=2^\ell+\sum_{k=1}^{\ell-1}\alpha_k2^k\mbox{ and }2n-1=2^{\ell+1}+\sum_{k=1}^{\ell-1}\alpha_k2^{k+1}+1$$
so that the digits in the base 2 expansions of $n-1$ and $2n-1$ are:

\begin{center}
\begin{tabular}{|c|c|c|c|c|c|c|c|}\hline
$k$ & $\ell+1$ & $\ell$ & $\ell-1$ & \ldots & 2 & 1 & 0\\\hline
$2n-1$ & 1 & $\alpha_{\ell-1}$ & $\alpha_{\ell-2}$ & \ldots & $\alpha_1$ & 0 & 1\\\hline
$n-1$ & 0 & 1 & $\alpha_{\ell-1}$ & \ldots & $\alpha_2$ & $\alpha_1$ & 0\\\hline
\end{tabular}
\end{center}
The same argument as before shows that ${2n-1\choose n-1}$ is divisible by 2.
\end{proof}

Another example for which $\beta_{n+1,\{1,\ldots,n\}}=1$ and ergodicity holds, is the following modification of the discrete L\'evy transformation: $$\psi_n(u_1,\ldots,u_n)=\max(u_1,\ldots,u_n)\sgn(u_1+\ldots+u_{n-1}).$$

\newpage

\section*{Appendix}
\vspace*{1em}
\begin{mdframed}
[style=MyFrame1]
\begin{lemma}\label{onesetlemma}
Let $(M_k)_{k\ge1}$ be a sequence of subsets of $\mathbb{N}$. Suppose that the cardinality of $M_k$ is finite and constant. If $\limsup_nM_n = \emptyset$ then
$$\lim_n\frac1n\sum_{k=1}^n|M_k\cap M_{n+1}| = 0.$$
\end{lemma}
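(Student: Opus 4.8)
The plan is to recast the double sum as a count of index occurrences and then exploit the hypothesis $\limsup_n M_n=\emptyset$, which is exactly the statement that every integer lies in only finitely many of the sets $M_k$. Write $m$ for the (eventually) common cardinality of the $M_k$, and for $j\in\mathbb N$ put $\nu_n(j)=\#\{k\le n:j\in M_k\}$ and $\nu(j)=\#\{k\ge1:j\in M_k\}$; the hypothesis gives $\nu(j)<\infty$ for every $j$. Since
$$\sum_{k=1}^{n}|M_k\cap M_{n+1}|=\sum_{j\in M_{n+1}}\nu_n(j)$$
and $|M_{n+1}|=m$ for $n$ large, it is enough to prove that $n^{-1}\nu_n(j_n)\to0$ for every sequence $j_n\in M_{n+1}$; that is, to get a bound on $\max_{j\in M_{n+1}}\nu_n(j)$ that is $o(n)$.

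As a first reduction I would kill any fixed initial block of indices $k$: for $L\in\mathbb N$ the set $F_L=\bigcup_{k\le L}M_k$ is finite, so $\limsup_n M_n=\emptyset$ furnishes $N_L$ with $M_{n}\cap F_L=\emptyset$ for $n\ge N_L$; then for $n\ge N_L$ every $k\le L$ contributes $M_k\cap M_{n+1}=\emptyset$, and the sum reduces to $\sum_{k=L+1}^n|M_k\cap M_{n+1}|=\sum_{j\in M_{n+1}}\#\{L<k\le n:j\in M_k\}$. Letting $n\to\infty$ and then $L\to\infty$ shows that only the ``recent'' occurrences of each index can matter.

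The crux — and the step I expect to be the obstacle — is to bound $\nu_n(j)$ uniformly over $j\in M_{n+1}$ by a quantity that is $o(n)$. The naive per-term estimate $|M_k\cap M_{n+1}|\le m$ only yields $O(n)$ for the whole sum, so one really has to use that an index still present at time $n+1$ cannot have occupied a positive fraction of $M_1,\dots,M_n$. In the situations the lemma is applied to this is immediate: when the $M_k$ are moving windows, e.g.\ $M_k=\{k-m,\dots,k-1\}$, each integer belongs to at most $m$ of the sets, so $\nu(j)\le m$ and hence $\sum_{k=1}^n|M_k\cap M_{n+1}|\le m^2$, giving the claim at once. In general one would argue that, since $\nu(j)<\infty$ for each $j$ while exactly $m$ indices are ``active'' at every time, the total overlap $\sum_{j\in M_{n+1}}\nu_n(j)$ cannot grow linearly; making this quantitative — producing, for each $\varepsilon>0$, an $n_\varepsilon$ beyond which the overlap is below $\varepsilon n$ — is the heart of the matter, and it is where both hypotheses (bounded cardinality and finite multiplicity of every index) must be used together.
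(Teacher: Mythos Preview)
Your reformulation $\sum_{k=1}^n|M_k\cap M_{n+1}|=\sum_{j\in M_{n+1}}\nu_n(j)$ is the right one, and you correctly isolate the crux: one needs $\max_{j\in M_{n+1}}\nu_n(j)=o(n)$. You flag this as ``the heart of the matter'' without proving it, and your instinct that this is the obstacle is exactly right --- because in fact this step \emph{cannot} be carried out: the lemma as stated is false. Take $m=1$ and $M_k=\{a_k\}$ with $a_k=\lfloor\log_2 k\rfloor+1$, so the sequence $(a_k)$ reads $1,2,2,3,3,3,3,4,\dots$, the integer $j$ occupying the block $k\in\{2^{j-1},\dots,2^j-1\}$. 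Every integer appears only finitely many times, so $\limsup_n M_n=\emptyset$, and $|M_k|\equiv1$. Yet for $n=2^j-2$ one has $a_{n+1}=j$ and $\nu_n(j)=2^{j-1}-1$, giving
\[
\frac1n\sum_{k=1}^n|M_k\cap M_{n+1}|=\frac{2^{j-1}-1}{2^j-2}=\frac12,
\]
while for $n=2^j-1$ the ratio is $0$. So the limit does not exist and the $\limsup$ equals $1/2$. Your heuristic that ``the total overlap cannot grow linearly'' is precisely what breaks.

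The paper's own proof founders at the same place, just less visibly. It argues the contrapositive starting from ``$\lim_n\frac1n\sum_k|M_k\cap M_{n+1}|>0$'', which is not the negation of ``the limit is $0$''; and it then passes from a positive limit of the sum to a positive limit of one of the $m^2$ summands $\frac1n\sum_k\delta(u^{(i)}_k,u^{(j)}_{n+1})$, which need not exist individually. The subsequent contradiction (``$A_{N-1}=\emptyset$ violates $|A_n|>n\varepsilon$'') tacitly requires the inequality for \emph{all} large $n$, not just along a subsequence. In the applications the paper has in mind --- e.g.\ $M_k=\{k-m,\dots,k-1\}$ --- your direct observation that $\nu(j)\le m$, hence $\sum_{j\in M_{n+1}}\nu_n(j)\le m^2$, is the clean and correct argument; it is the general statement, not your approach to it, that fails.
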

\end{mdframed}
\begin{proof}
We prove the contrapositive statement that if $\lim_n\frac1n\sum_{k=1}^n|M_k\cap M_{n+1}| > 0$, then $\limsup_nM_n \neq \emptyset$.

We label, say in the increasing order, the elements of $M_k$, $u^{(1)}_k,\ldots,u^{(m)}_k$, where $m=|M_k|$. Then
$$M_k\cap M_{n+1} = \left(\bigcup_{i=1}^m\{u^{(i)}_k\}\right)\cap\left(\bigcup_{j=1}^m\{u^{(i)}_{n+1}\}\right) = \bigcup_{i,j=1}^m\left(\{u^{(i)}_k\}\cap\{u^{(j)}_{n+1}\}\right)$$
and
$$\sum_{i,j=1}^m\lim_n\frac1n\sum_{k=1}^n|\{u^{(i)}_k\}\cap\{u^{(j)}_{n+1}\}| = \lim_n\frac1n\sum_{k=1}^n|M_k\cap M_{n+1}| > 0,$$
from which we deduce that for at least one pair $(i,j)$, $\lim_n\frac1n\sum_{k=1}^n\delta(u^{(i)}_k,u^{(j)}_{n+1})>0$.
Here $\delta(k,\ell)$ denotes the Kronecker delta function: $\delta(k,\ell)=1$ if $k=\ell$ and 0 otherwise.

Let $A_n=\{k\leq n;u^{(i)}_k=u^{(j)}_{n+1}\}$. Then for $\varepsilon>0$ and $n$ large enough
$$\frac1n|A_n| = \frac1n\sum_{k=1}^n\delta(u^{(i)}_k,u^{(j)}_{n+1}) > \varepsilon;\mbox{ i.e. }|A_n|>n\varepsilon.$$
Fix such $n$ and let $N=\min\{\ell>n;u^{(j)}_\ell\not\in\{u^{(i)}_1,\ldots,u^{(i)}_n\}\}$. If $N$ is finite, then $A_{N-1}=\emptyset$ and does not satisfy the requirement that $|A_N|>N\varepsilon$. It follows that $N$ is infinite, that at least one integer in $\{u^{(i)}_1,\ldots,u^{(i)}_n\}$ is repeated infinitely many times and that $\limsup_nM_n\neq\emptyset$.
\end{proof}

\section*{Acknowledgements}
This research was supported by the Australian Research Council Grant DP180100613.

RJ Williams gratefully acknowledges the support of NSF grant DMS-1712974 and thanks the Center for Modeling of Stochastic Systems at Monash University for their generous hospitality  during her sabbatical visit there.

\end{document}